\newtheorem{theorem}{Theorem}[section]
\newtheorem{lemma}[theorem]{Lemma}
\newtheorem{proposition}[theorem]{Proposition}
\newtheorem{corollary}[theorem]{Corollary}
\theoremstyle{definition}
\newtheorem{definition}[theorem]{Definition}
\newtheorem{example}[theorem]{Example}
\newtheorem{notation}[theorem]{Notation}
\theoremstyle{remark}
\newtheorem{remark}[theorem]{Remark}
\numberwithin{equation}{section}
\DeclareMathOperator{\GL}{GL}
\DeclareMathOperator{\SL}{SL}
\DeclareMathOperator{\Mat}{Mat}
\DeclareMathOperator{\Lie}{Lie}
\DeclareMathOperator{\Ker}{Ker}
\renewenvironment{cases}[1][l]{\matrix@check\cases\env@cases{#1}}{\endarray\right.}
\def\env@cases#1{%
  \let\@ifnextchar\new@ifnextchar
  \left\lbrace\def\arraystretch{1.2}%
  \array{@{}#1@{\quad}l@{}}}
\renewcommand{\paragraph}{%
  \@startsection{paragraph}{4}%
  {\z@}{1.5ex \@plus 1ex \@minus .2ex}{-1em}%
  {\normalfont\normalsize\bfseries}%
}
\let\orgdescriptionlabel\descriptionlabel
\renewcommand*{\descriptionlabel}[1]{%
  \let\orglabel\label
  \let\label\@gobble
  \phantomsection
  \edef\@currentlabel{#1}%
  \let\label\orglabel
  \orgdescriptionlabel{#1}%
}
\begin{document}

\title{Degree bound for toric envelope of a linear algebraic group}


\author{Eli Amzallag}
\address{Department of Mathematics, CUNY Graduate Center, 365 Fifth Avenue, New York, USA}
\curraddr{Department of Mathematics, The City College of New York, New York, USA}
\email{eamzallag@gradcenter.cuny.edu}

\author{Andrei Minchenko}
\address{Faculty of Mathematics, University of Vienna, Oskar-Morgenstern-Platz 1, Vienna, Austria}
\email{an.minchenko@gmail.com}

\author{Gleb Pogudin}
\address{Courant Institute of Mathematical Sciences, New York University, 251 Mercer st., New York,  USA}
\curraddr{LIX, CNRS, \'Ecole Polytechnique, Institute Polytechnique de Paris, Palaiseau, France}
\email{gleb.pogudin@polytechnique.edu}

\thanks{This work was partially supported by the NSF grants CCF-095259, CCF-1564132, CCF-1563942, DMS-1606334, DMS-1760448 by the NSA grant \#H98230-15-1-0245, by CUNY CIRG \#2248, by PSC-CUNY grants \#69827-00 47, \#60098-00 48, by the Austrian Science Fund FWF grants Y464-N18, P28079-N35.}

\subjclass[2020]{Primary 14Q20, 34M15, 14L17}

\date{}

\begin{abstract}
  Algorithms working with linear algebraic groups often represent them via defining polynomial equations.
  One can always choose defining equations for an algebraic group to be of degree at most the degree of the group as an algebraic variety.
  However, the degree of a linear algebraic group $G \subset \GL_n(C)$ can be arbitrarily large even for $n = 1$.
  One of the 
  key ingredients 
  of Hrushovski's algorithm for computing the Galois group of a linear differential equation was an idea to ``approximate'' every algebraic subgroup of $\GL_n(C)$ by a ``similar'' group so that the degree of the latter is bounded uniformly in $n$. Making this uniform bound computationally feasible is crucial for making the algorithm practical.
  
  In this paper, we derive a single-exponential degree bound for such an approximation (we call it a \emph{toric envelope}), which is qualitatively optimal.
  As an application, we improve the quintuply exponential bound due to Feng for the first step of Hrushovski's algorithm to a single-exponential bound.
  For the cases $n = 2, 3$ often arising in practice, we further refine our general bound.
\end{abstract}

\maketitle

\section{Introduction}

\subsection{Representing linear algebraic groups in algorithms}
A linear algebraic group is a subgroup of the group $\GL_n(C)$ of invertible $n\times n$ matrices over a field $C$ that is defined by a system of polynomial equations in matrix entries.
Such groups arise naturally in different areas of mathematics.

For algorithms dealing with arbitrary linear algebraic groups, there are two standard ways of representing such a group~\cite[Section~3.13]{deGraaf} 
\begin{enumerate}[label=\textbf{(R\arabic*)}]
  \item\label{rep:polys} by a system of defining polynomial equations 
  \item\label{rep:gens} by a set of generators of a dense subgroup.
\end{enumerate}
The approach \ref{rep:polys} is convenient, for example, for membership testing and for computing  
the dimension and the Lie algebra.
\ref{rep:gens} is useful for computing normalizers and centralizers.
Other ways of representing are available if some additional information (e.g., being connected or reductive) is known about the group.
We refer to~\cite[Section~3.13]{deGraaf} for a discussion.

In this paper, we will focus on~\ref{rep:polys}, the representation of linear algebraic groups by a system of defining equations.
It is known~\cite[Proposition~3]{Heintz} that an affine
variety can be defined by a system of polynomial equations of degree at most the degree of the variety. 
Thus, the degree of an algebraic group as an algebraic variety becomes a natural measure of complexity if~\ref{rep:polys} is chosen.
In addition to that, the degrees of an algebraic group and its orbits play an important role in constructive invariant theory~\cite{D99, D01} (see also~\cite{K87,B89} for bounds in the case of a reductive group).
The degree of a linear algebraic group can be arbitrarily large even in the case of $n = 1$ (see Example~\ref{ex:roots}).
However, as we show in this paper, every linear algebraic group can be ``approximated'' by a group of degree at most $(4n)^{3n^2}$ (see Section~\ref{sec:summary}).


\subsection{Hrushovski's algorithm}
Our main motivation comes from Hrushovski's algorithm for computing the Galois group of a linear differential equation~\cite{Hrushovski}.
A Galois group is associated to every linear differential equation and captures such properties of the solutions of the equation
as solvability by quadratures and algebraicity of relations among solutions (for details, see~\cite{SingerVanDerPut}).
Galois theory of differential equations has applications in integrable systems~\cite{MoralesRuiz} and number theory~\cite{Beukers}, among other areas.

In contrast with the Galois theory of polynomial equations, 
in which Galois groups are finite, 
the Galois group of a linear differential equation is a linear algebraic group and so it is usually infinite.
Moreover, it is known~\cite{Tretkoff,SingerMitschi,Hartmann} that every linear algebraic group over an algebraically closed field $C$ of characteristic zero can appear as the Galois group of a linear differential equation over $C(t)$.
Several algorithms were designed for computing the differential Galois group in special cases~\cite{Kovacic,CompointSinger,SingerUlmer},  computing invariants~\cite{vanHoeijWeil} and the Lie algebra~\cite{Barkatouetal} of the differential Galois group, and computing the differential Galois group approximately~\cite{VanDerHoeven}.

Let $C$ be an algebraically closed field of characteristic zero.
The first general algorithm for computing the Galois group of a linear differential equation over $C(t)$ due to Hrushovski~\cite{Hrushovski} appeared in 2002.
The algorithm was used, for example, to design algorithms for computing the Galois group of a linear differential equation with parameters in several cases~\cite{MOS14,MOS15}.
For the last decade, it has been a challenge to understand the complexity of Hrushovski's algorithm and make it practical, see~\cite{Feng,Sun,Rettstadt} for recent progress in this direction.
One of the key ingredients of the algorithm is the following fact, which is of independent interest to the effective and computational theory of algebraic groups.
There exists a function $\bm{d}(n)$ such that for every algebraic group $G \subset \GL_n(C)$ there exists an algebraic group $H \subset \GL_n(C)$ containing $G$ such that
\begin{enumerate}[label=\textbf{(H\arabic*)}]
  \item\label{h:approx} $H$ \emph{approximates} $G$ in the following sense: there is a set of characters of $H^\circ$ such that $G^\circ$ is equal to the intersection of their kernels ($X^\circ$ denotes the connected component of identity in $X$).
  \item\label{h:degree} $H$ can be defined by equations of degree \emph{at most} $\bm{d}(n)$. 
\end{enumerate}
Constructing such an approximation $H$ of the differential Galois group $G$ is the first step of Hrushovski's algorithm.
The bound on the degrees of defining equations allows one to search for defining equations of $H$ using undetermined coefficients.
With such an approximation $H$ at hand, the algorithm then proceeds to compute $G$ by using the algorithm by Compoint and Singer~\cite{CompointSinger}.

Hrushovski himself did not provide an explicit expression for $\bm{d}(n)$ but showed its existence~\cite[Corollary~3.7]{Hrushovski}.
He also conjectured that the overall complexity of the algorithm is at most double-exponential~\cite[Remark~4.4]{Hrushovski}.
Feng~\cite[Proposition~B.14]{Feng} found the first explicit formula for such a $\bm{d}(n)$ by presenting a function of quintuply exponential growth in $n$ that could be used as $\bm{d}(n)$ in~\ref{h:degree} (see also~\cite{Sun} for a related bound for $H$).


\subsection{Summary of the main results}\label{sec:summary}
In this paper, we show that every linear algebraic group $G \subset \GL_n(C)$ can be approximated in the sense of~\ref{h:approx} by a group of degree at most
\begin{equation}\label{eq:main_bound}
\begin{cases}[c]
   1 &\text{ for } n = 1,\\
   6 &\text{ for } n = 2,\\
   360 &\text{ for } n = 3,\\
   (4n)^{3n^2} &\text{ for } n > 3.
\end{cases}
\end{equation}
More precisely, we formalize~\ref{h:approx} by the notion of a \emph{toric envelope}.
We say that $H \subset \GL_n(C)$ is a \emph{toric envelope} of an algebraic group $G \subset \GL_n(C)$ if there exists a torus $T \subset \GL_n(C)$ such that $H$ can be written as a product $T \cdot G$ (as a product of abstract groups).
Theorems~\ref{thm:general_bound}, \ref{thm:n2}, and~\ref{thm:n3} state that every algebraic subgroup of $\GL_n(C)$ has a toric envelope of degree at most~\eqref{eq:main_bound}.
 
In particular, we show that one can take $\bm{d}(n) = (4n)^{3n^2}$ (see Section~\ref{sec:proto}).
This improves Feng's result dramatically.
Our bound is qualitatively optimal (see Remark~\ref{rem:optimal}) in the sense that any such bound is at least single-exponential.

\subsection{Outline of the approach}

We derive a general bound in Theorem~\ref{thm:general_bound} for a group $G \subset \GL_n(C)$ in the following steps.

    \paragraph{Divide.} By the Levi  decomposition, $G$ is a product of a reductive group and the unipotent radical.
    
    \paragraph{Conquer (reductive).} We find a toric envelope of bounded degree for the reductive group (Section~\ref{sub:reductive}). We reduce the problem to the case of a connected reductive group by deriving a bound (Lemma~\ref{lem:bound_components}) analogous to the Jordan bound for finite subgroups of $\GL_n(C)$~\cite{JordanBound}.
    We construct a toric envelope in the case of a connected group using the Lie correspondence and theory of reductive Lie algebras (Lemma~\ref{lem:connected_reductive}).
    We put everything together in Lemma~\ref{lem:reductive_bound}.
    
    \paragraph{Conquer (unipotent).} We derive a degree bound for any unipotent subgroup of $\GL_n(C)$ via representing it as the image of the exponential map (Section~\ref{sub:unipotent}).
    
    \paragraph{Combine.} We combine the obtained bounds to produce a toric envelope of $G$ (Section~\ref{sub:levi_bound}).

\noindent
The proof of Theorem~\ref{thm:n2} takes a different approach based on a classification of subgroups of $\SL_2(C)$ and computer-assisted computation of the degree bound for the hardest special cases (Section~\ref{sec:pf3}).
The proof of Theorem~\ref{thm:n3} refines the ideas from the proof of Theorem~\ref{thm:general_bound} in the case $n = 3$ (Section~\ref{sec:pf4}).

\subsection{Structure of the paper} 
The rest of the paper is organized as follows. Section~\ref{sec:prelim} contains the notions used to state the main results and illustrates them by examples.
Section~\ref{sec:main} contains the main results of the paper.
Section~\ref{sec:proto} describes the application of the main results to Hrushovski's algorithm for computing the differential Galois group of a linear differential equation.
Sections~\ref{sec:pf1}, \ref{sec:pf2}, \ref{sec:pf3}, and~\ref{sec:pf4} contain proofs of the main results.


\section{Preliminaries}
\label{sec:prelim}

Throughout the paper, $C$ denotes an algebraically closed field of characteristic zero.

\begin{definition}
  A \emph{torus} is a commutative connected algebraic subgroup $T \subset \GL_n(C)$ such that every element of $T$ is diagonalizable.
\end{definition}

\begin{definition}\hypertarget{def_envelope}{}
  Consider a linear algebraic group $G \subset \GL_n(C)$.
  We say that an algebraic group $H \subset \GL_n(C)$ is \emph{a toric envelope} of $G$ if there exists a torus $T \subset \GL_n(C)$ such that $H = T \cdot G$ (product as abstract groups). 
\end{definition}

\begin{remark}
  The product $T\cdot G$ is not necessarily semidirect.
  More precisely,
  $T$ does not necessarily normalize $G$ (Example~\ref{ex:dihedral}) and $G$ does not necessarily normalize $T$ (Example~\ref{ex:torus}).
\end{remark}

\begin{example}
  Every linear algebraic group $G \subset \GL_n(C)$ is a toric envelope of itself (with $T = \{e\}$).
\end{example}

\begin{definition}
  A algebraic subvariety $X \subset \GL_n(C)$ is said to be \emph{bounded by $d$}, where $d$ is a positive integer, if there exist polynomials $f_1, \ldots, f_M \in C[x_{11}, x_{12}, \ldots, x_{nn}]$ of degree at most $d$ such that 
  \[
  X = \GL_n(C) \cap \{ f_1 = f_2 = \ldots = f_M = 0 \}.
  \]
\end{definition}

The following notion of the degree of a variety (we specialize it to subvarieties of $\GL_n(C)$, see~\cite[Section~2]{Heintz} for a general treatment) is a generalization of the notion of degree of a polynomial.

\begin{definition}
  Let $X \subset \GL_n(C)$ be a subvariety such that all irreducible components of $X$ are of the same dimension $m$ (for example, this is the case if $X$ is a linear algebraic group).
  Then 
  \[
  \deg X := \max\left\{ |X \cap H| \;\colon\; H \text{ is a hyperplane of codimension } m \text{ such that } |X \cap H| \text{ is finite} \right\}.
  \]
\end{definition}

\noindent
By~\cite[Remark~2]{Heintz}, the degree of a hypersurface is equal to the degree of its defining polynomial.

\begin{proposition}[{{follows from~\cite[Proposition~3]{Heintz} }}]
  Let $X \subset \GL_n(C)$ be a subvariety of degree $D$.
  Then $X$ can be defined by equations of degree at most $D$.
\end{proposition}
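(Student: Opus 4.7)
The plan is to reduce to Heintz's original result, which is stated for affine subvarieties of $\mathbb{A}^N$, by taking the Zariski closure of $X$ in the ambient space $\Mat_n(C) \cong \mathbb{A}^{n^2}$. Let $\overline{X}$ denote this closure. Since $\GL_n(C)$ is open in $\Mat_n(C)$ and $X$ is closed in $\GL_n(C)$, we have $X = \overline{X} \cap \GL_n(C)$; moreover, the irreducible components of $\overline{X}$ are just the closures of the irreducible components of $X$, so $\overline{X}$ is equidimensional of the same dimension $m$. Applying~\cite[Proposition~3]{Heintz} to the affine variety $\overline{X} \subset \mathbb{A}^{n^2}$ yields polynomials $f_1,\ldots,f_M \in C[x_{11},\ldots,x_{nn}]$ of degree at most $\deg \overline{X}$ whose common zero locus in $\mathbb{A}^{n^2}$ is $\overline{X}$. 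Intersecting this zero locus with $\GL_n(C)$ then recovers $X$ exactly, so these $f_i$ witness that $X$ is bounded by $\deg \overline{X}$.

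The remaining task is to argue that $\deg \overline{X} \le D = \deg X$. Both degrees are defined via intersections with generic linear subspaces of complementary dimension, so the natural strategy is to show that a generic codimension-$m$ linear subspace $L \subset \mathbb{A}^{n^2}$ computes both degrees simultaneously. The key observation is that $\overline{X} \setminus X = \overline{X} \cap \{\det = 0\}$ is a proper closed subset of $\overline{X}$ (since $X$ is dense in $\overline{X}$), hence has dimension strictly less than $m$. A dimension count then shows that for a generic $L$ of codimension $m$ one has $L \cap (\overline{X} \setminus X) = \emptyset$, so the entire finite set $\overline{X} \cap L$ lies in $\GL_n(C)$ and coincides with $X \cap L$. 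Consequently $|X \cap L| = |\overline{X} \cap L| = \deg \overline{X}$ for such $L$, and by the definition of $\deg X$ as the maximum of $|X \cap H|$ over hyperplanes of codimension $m$, we obtain $\deg \overline{X} \le \deg X = D$. Combined with the previous paragraph, the polynomials $f_1,\ldots,f_M$ have degree at most $D$, proving the proposition.

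The only mildly nontrivial step is the dimension-count argument for the genericity claim; everything else is bookkeeping. If one prefers to bypass even this, an alternative is to invoke the general fact that taking Zariski closure in an open subset (here, $\GL_n(C) \subset \mathbb{A}^{n^2}$) does not increase the degree of an equidimensional variety, and then directly quote Heintz's result for $\overline{X}$.
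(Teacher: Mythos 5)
Your proof is correct and follows the route the paper intends, namely reducing to Heintz's Proposition~3 for the affine closure $\overline{X}\subset\mathbb{A}^{n^2}$. The paper gives no argument beyond the citation, so your observation that a generic codimension-$m$ linear subspace simultaneously realizes $\deg\overline{X}$ and avoids the lower-dimensional locus $\overline{X}\setminus X=\overline{X}\cap\{\det=0\}$, yielding $\deg\overline{X}\leqslant\deg X$, is exactly the standard bookkeeping needed to make the citation precise.
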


The following examples show that the degree of a toric envelope of a group can be much smaller that the degree of the group itself.

\begin{example}\label{ex:roots}
  Let $N$  be a positive integer and $G \subset \GL_1(C)$ be the group of all $N$-th roots of unity.
  It is defined inside $\GL_{1}(C)$ by a single equation $x^N - 1 = 0$ of degree $N$, so it has degree $N$.
  The whole $\GL_1(C)$ is a toric envelope of $G$ with $T = \GL_1(C)$ and is of degree $1$.
\end{example}

\begin{example}\label{ex:torus}
  Consider 
  \[
    G = \left\{ \begin{pmatrix}
    a & b \\
    0 & a^{2018}
    \end{pmatrix} \;\mid\; a \in C^\ast, \; b \in C \right\}.
  \]
  One can show that the degree of $G$ is $2018$.
  Let $T$ be the group of all diagonal matrices.
  Then the group of all triangular matrices in $\GL_2(C)$ is a toric envelope of $G$ because it is equal to $T \cdot G$.
  This group is defined by a single linear equation, so it has degree $1$.
\end{example}

\begin{example}\label{ex:dihedral}
  Consider a dihedral group
  \[
  G = \left\{ \begin{pmatrix}
  \varepsilon^m & 0 \\
  0 & \varepsilon^{-m}
  \end{pmatrix} \;\mid\; m \in \mathbb{Z} \right\} \cup \left\{ \begin{pmatrix}
  0 & \varepsilon^m \\
  \varepsilon^{-m} & 0
  \end{pmatrix} \;\mid\; m \in \mathbb{Z} \right\},
  \]
  where $\varepsilon$ is a primitive $2018$-th root of unity.
  Since $G$ is a zero-dimensional variety consisting of $4036$ points, $\deg G = 4036$.
  Let $T$ be the group of all diagonal matrices.
  Then 
  \[
  T\cdot G = \left\{ \begin{pmatrix}
  a & 0 \\
  0 & b
  \end{pmatrix} \;\mid\; a, b \in C^\ast \right\} \cup \left\{ \begin{pmatrix}
  0 & a \\
  b & 0
  \end{pmatrix} \;\mid\; a, b \in C^\ast \right\}
  \]
  is a toric envelope of $G$. 
  Since $T\cdot G$ is a union of two two-dimensional spaces, $\deg (T \cdot G) = 2$.
\end{example}

\section{Main results}
\label{sec:main}
\begin{theorem}\label{thm:general_bound}
  Let $C$ be an algebraically closed field of characteristic zero.
  Let $G \subset \GL_n(C)$ be a linear algebraic group.
  Then there exists a \hyperlink{def_envelope}{toric envelope} $H$ of $G$ of degree at most $(4n)^{3n^2}$. 
  In particular, $H$ is bounded by this number.
\end{theorem}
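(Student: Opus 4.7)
The plan is to follow the four-step divide-and-conquer strategy sketched in the outline. First, I would invoke the Levi decomposition to write $G = L \ltimes R_u(G)$, where $L$ is a reductive Levi factor and $R_u(G)$ is the unipotent radical. I would then construct toric envelopes of each piece separately with degree bounds that are single-exponential in $n$, and assemble them at the end.

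For the reductive part, I would first reduce to the connected case. The finite quotient $L/L^\circ$ can be controlled by a Jordan-type bound: every finite subgroup of $\GL_n(C)$ contains an abelian normal subgroup of bounded index, and a suitable variant should give a bound on the number of components of a toric envelope of $L$ in terms of the connected case. For connected reductive $L^\circ$, I would go through the Lie correspondence. Writing $\mathfrak{l} = \mathfrak{z}(\mathfrak{l}) \oplus [\mathfrak{l},\mathfrak{l}]$, the center contains the Lie algebra of a torus; enlarging that torus to the full maximal torus $T$ of diagonal (or diagonalizable) matrices sitting above $\mathfrak{z}(\mathfrak{l})$ should absorb the degree blowup coming from the center, leaving only the semisimple part $[\mathfrak{l},\mathfrak{l}]$, whose corresponding connected subgroup of $\GL_n(C)$ has degree controllable by classical bounds on semisimple subalgebras of $\mathfrak{gl}_n$.

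For the unipotent radical, I would use that the matrix exponential $\exp \colon \mathfrak{u} \to R_u(G)$ is a polynomial isomorphism onto its image, with both $\exp$ and $\log$ given by truncated series of degree at most $n-1$ on nilpotent matrices. Thus $R_u(G)$ is the image of a linear subspace of $\mathfrak{gl}_n(C)$ under a polynomial map of bounded degree, and its degree as a variety is bounded using standard estimates for images of polynomial maps (e.g.\ via B\'ezout). The resulting bound is polynomial in $n$, which is much better than needed.

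Finally, to combine, I would take $H = T \cdot L \cdot R_u(G)$ where $T$ is the torus envelope produced from the reductive step. Because $L$ normalizes $R_u(G)$ and $T$ commutes with or acts compatibly on both, $H$ is an algebraic group, and its degree can be estimated by a product-type bound from the degrees of the toric envelope of $L$ and of $R_u(G)$. The main obstacle will be the reductive step: controlling simultaneously the number of components (Jordan-type bound) and the degree contribution from the semisimple Lie algebra while keeping the product inside $(4n)^{3n^2}$. The unipotent step is comparatively mild, and the final combination step is essentially bookkeeping once one verifies that $T \cdot G$ is closed and that the degree multiplies correctly across the Levi factors.
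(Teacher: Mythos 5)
Your overall architecture matches the paper's: Levi decomposition, Jordan-type reduction to the connected case, Lie correspondence for the connected reductive piece, matrix exponential for the unipotent radical, and a product estimate to combine. But two specific ideas you sketch for the reductive step do not work as stated, and a crucial compatibility constraint is missing.

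First, the suggestion to ``enlarge the torus to the full maximal torus of diagonalizable matrices sitting above $\mathfrak{z}(\mathfrak{l})$'' fails, because the product of that maximal torus with $L$ need not be a group, and even if it were, such a torus generally does not normalize the unipotent radical $R_u(G)$, which is required for the final product $H_s\cdot R_u(G)$ to be closed under multiplication. The paper resolves this by taking the torus to be $\bigl(Z(G)\cap Z(Z(G))\cap N(F)\bigr)^\circ$ (with $F$ the unipotent radical), proves it is a torus by decomposing $C^n$ into $G$-isotypic components and applying Schur's lemma, and then bounds its degree by showing $H = \widehat{H}^\circ$ with $\widehat{H} = N(S)\cap Z(Z(G))\cap N(F)$. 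The identification of $N(S)^\circ$ as $S\cdot Z(S)^\circ$ rests on Whitehead's lemma to split $\mathfrak{n}(\mathfrak{s})$ as $\mathfrak{s}\oplus\mathfrak{z}(\mathfrak{s})$, after which the degree bound $n^{n^2-\dim G}$ follows from B\'ezout-type estimates for intersections with normalizers and centralizers (defined by polynomials of degree $\leqslant n$). There is no direct ``classical bound on semisimple subalgebras of $\mathfrak{gl}_n$'' invoked; the semisimple factor is never estimated in isolation.

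Second, the number-of-components argument needs more than a plain Jordan bound $J(n)$. Writing the reductive group as $\Gamma G^\circ$ with $\Gamma$ finite and $G^\circ = S\cdot T$, one must additionally control the action of an abelian normal subgroup $\Gamma_{\mathrm{ab}}\subset\Gamma$ on $T\cap\SL_n(C)\cong (C^\ast)^d$ by conjugation, which lands in $\GL_d(\mathbb{Z})$ and contributes a factor $A(n-1)$ (the maximal size of a finite abelian subgroup of $\GL_{n-1}(\mathbb{Z})$), plus a factor $n^{n-1}$ from intersecting the resulting fixed torus with normalizers. Without both factors the final bookkeeping does not close to $(4n)^{3n^2}$. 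Finally, the normalization constraint $G\subset N(F)$ must be carried through every reductive-step lemma; you mention compatibility only at the very end, but it has to be built into the construction of the torus from the outset.
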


\begin{theorem}\label{thm:n2}
   Let $C$ be an algebraically closed field of characteristic zero.
  Let $G \subset \GL_2(C)$ be a linear algebraic group.
  Then there exists a \hyperlink{def_envelope}{toric envelope} $H$ of $G$ such that $H$ is bounded by $6$.
\end{theorem}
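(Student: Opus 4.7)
The plan is to reduce to subgroups of $\operatorname{PGL}_2(C)$ via the center, apply the classical Klein classification of algebraic subgroups of $\operatorname{PGL}_2(C)$, handle the three ``large'' families by exhibiting a standard envelope, and dispatch the three exceptional finite subgroups by explicit computation.

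Let $Z := C^\ast \cdot I_2$ be the center of $\GL_2(C)$, a one-dimensional torus. For any linear algebraic group $G \subset \GL_2(C)$, the product $Z \cdot G$ is always a toric envelope of $G$, so the problem only depends on $G$ up to multiplication by $Z$, equivalently on the image $\overline{G}$ in $\operatorname{PGL}_2(C)$. By the classical Klein classification, after conjugation in $\operatorname{PGL}_2(C)$ the group $\overline{G}$ falls into one of the following cases: (i) $\overline{G}$ lies in a Borel subgroup; (ii) $\overline{G}$ lies in the normalizer of a maximal torus; (iii) $\overline{G} = \operatorname{PGL}_2(C)$; (iv) $\overline{G}$ is one of the exceptional finite subgroups $A_4, S_4, A_5$.

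In cases (i)--(iii) the envelope can be taken inside a standard small-degree subgroup of $\GL_2(C)$. In case (i), with $T_0$ the diagonal torus, $T_0 \cdot G$ lies in the Borel subgroup of $\GL_2(C)$, cut out by the linear equation $x_{21}=0$, so bounded by $1$. In case (ii), $T_0 \cdot G$ lies in the normalizer of the diagonal torus in $\GL_2(C)$, cut out by the degree-$2$ equations $x_{11}x_{12}= x_{11}x_{21}= x_{12}x_{22}= x_{21}x_{22}=0$ as in Example~\ref{ex:dihedral}, so bounded by $2$. In case (iii), $Z \cdot G = \GL_2(C)$, bounded by $1$. In each case the bound is well below $6$.

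The delicate case is (iv). Since $\overline{G}$ is nonabelian and does not normalize any maximal torus of $\operatorname{PGL}_2(C)$, the only admissible torus is $T = Z$, and one may take $G$ to be maximal with given image, namely the full preimage $Z \cdot \widetilde{G}$ in $\GL_2(C)$, where $\widetilde{G} \subset \SL_2(C)$ is the corresponding binary polyhedral group (binary tetrahedral, octahedral, or icosahedral). One is then left bounding the degrees of defining equations for the finite union of $12$, $24$, or $60$ one-dimensional $Z$-cosets comprising $Z \cdot \widetilde{G}$ inside $\Mat_2(C) \cong C^4$. For each of the three groups, the plan is to write explicit matrix generators of $\widetilde{G}$ in terms of small roots of unity, enumerate the cosets, and run a Gr\"obner basis computation on the vanishing ideal of the resulting union of lines to extract generators of minimal degree. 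The main obstacle is the binary icosahedral case; its fundamental invariants sit in degrees $12, 20, 30$, so no low-degree equations come for free from invariant theory, and the bound of $6$ is precisely what the explicit computer-assisted computation is designed to verify.
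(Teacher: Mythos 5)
Your proposal takes essentially the same route as the paper: the paper replaces $G$ by $Z_2 G$, sets $G_{\SL} := (Z_2 G)\cap\SL_2(C)$, and applies Kovacic's four-way classification of algebraic subgroups of $\SL_2(C)$, which is precisely the $\operatorname{PGL}_2$/Klein classification you invoke after passing to $\overline{G}$; both proofs then dispatch the binary tetrahedral, octahedral, and icosahedral cases by an explicit Gr\"obner-basis computation of the vanishing ideal of $Z\cdot\widetilde G$. The only detail you would need to supply in the Borel and torus-normalizer cases is that your candidate $T_0\cdot G$ is actually an algebraic group and coincides with $B$, $D$, or $D^\circ$ (the paper does this via Lemma~\ref{lem:def_TE}, showing that in the non-diagonalizable triangular case $U\subset G$ forces $T_0\cdot G=B$), but the claims and the resulting degree bounds are all correct.
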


\begin{theorem}\label{thm:n3}
  Let $C$ be an algebraically closed field of characteristic zero.
  Let $G \subset \GL_3(C)$ be a linear algebraic group.
  Then there exists a \hyperlink{def_envelope}{toric envelope} $H$ of $G$ of degree at most $360$. 
  In particular, $H$ is bounded by $360$.
\end{theorem}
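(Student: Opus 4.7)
The plan is to follow the four-step divide-and-conquer skeleton used in the proof of Theorem~\ref{thm:general_bound}, but to replace each step by a sharper classification-based argument available for $n = 3$. By the Levi decomposition write $G = L \ltimes R_u(G)$, with $L$ a reductive Levi subgroup and $R_u(G)$ the unipotent radical. Since every unipotent subgroup of $\GL_3(C)$ is, after conjugation, contained in the $3$-dimensional group of upper unitriangular matrices, the exponential-map argument of Section~\ref{sub:unipotent} produces a description of $R_u(G)$ by equations of very low degree, in fact at most cubic.

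For the reductive part, I would first treat the connected component $L^\circ$ via the Lie correspondence: the possible Lie algebras $\Lie(L^\circ) \subset \mathfrak{gl}_3(C)$ form a very short list --- a sum of a central torus with at most one non-abelian simple factor, which can only be $\mathfrak{sl}_2$ or $\mathfrak{sl}_3$ --- and for each entry in that list one exhibits a toric envelope of small degree as in Lemma~\ref{lem:connected_reductive}. Next, control the component group $L/L^\circ$ by sharpening Lemma~\ref{lem:bound_components} so that only the image of $L/L^\circ$ in $\mathrm{PGL}_3(C)$ contributes to the bound, the center being absorbed into the toric factor. This image is a finite subgroup of $\mathrm{PGL}_3(C)$ and falls under the classical classification of such groups; the largest primitive case, the Valentiner group isomorphic to $A_6$ of order $360$, is exactly what governs the final constant in the theorem.

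Finally, combine these contributions exactly as in Section~\ref{sub:levi_bound} to assemble a toric envelope of $G$ whose degree is at most $360$. The main obstacle will be the bookkeeping through the combination step: the degree of a toric envelope tends to multiply across the Levi factors, and one must verify that, after absorption of scalars into the torus, the connected-reductive and unipotent contributions are small enough that the total remains bounded by the order of the Valentiner group. A secondary technical difficulty is the sharpening of the Jordan-type lemma itself, which requires one to pass from $\GL_3(C)$ to $\mathrm{PGL}_3(C)$ in order to invoke the classification of finite primitive subgroups and to avoid the much larger generic Jordan constant supplied by Lemma~\ref{lem:bound_components}. Once both of these are in place, the bound $360$ should drop out as the order of the extremal primitive finite subgroup.
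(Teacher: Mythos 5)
Your high-level plan matches the skeleton of the paper's general argument (Levi decomposition, Lie correspondence for the connected reductive part, exponential map for the unipotent part, combine), and you correctly identify the Valentiner group $A_6 \subset \mathrm{PGL}_3(C)$ of order $360$ as the source of the final constant. However, the route you propose is genuinely different from what the paper does, and it has a gap in the crucial combination step.

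The paper does \emph{not} proceed by uniformly sharpening Lemma~\ref{lem:bound_components} and Lemma~\ref{lem:reductive_bound} and then feeding the improved constants into Lemma~\ref{lem:levi_bound}. Instead, after writing $G = G_0 \ltimes U$, it performs a case analysis on the decomposition of $C^3$ into $G_0$-irreducible summands: three lines ($\deg H \leqslant 144$), a line plus a plane (reduces to the $\GL_2$ result, $\deg H \leqslant 240$), or $C^3$ irreducible. In the irreducible case one has $U = \{e\}$ automatically, and a further split on the derived subgroup $S = [G_0^\circ, G_0^\circ]$ gives either $\deg H \leqslant 243$ or reduces to $G = \Gamma T$ with $\Gamma$ finite; only in the very last subcase, after further forcing $T_0 = Z_3$, is the classification of finite subgroups of $\SL_3(C)$ invoked, and there the degree \emph{is} the number of components $|\Gamma|/|\Gamma \cap Z_3| \leqslant 360$ with nothing else to multiply by. In other words, the Valentiner bound is only ever invoked when the unipotent radical, the semisimple part, and the positive-dimensional torus beyond $Z_3$ are all already trivial, so no multiplication across Levi factors occurs.

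This is precisely the point your proposal leaves unresolved. You explicitly flag the multiplicativity of degree across the product $T_0 \cdot G_0 \cdot U$ as ``the main obstacle'' and say one ``must verify'' the contributions stay small, but your plan offers no mechanism to do so. If you apply Lemma~\ref{lem:levi_bound} as written with a $360$ contribution from the component group, even a trivial torus and a two-dimensional unipotent radical ($\deg U \leqslant 2$ by Lemma~\ref{lem:unipotent_bound}) plus the $2^{n(n-1)/2} = 8$ factor already push the bound to $360 \cdot 2 \cdot 8 = 5760$. There is no way to reduce that back to $360$ by sharpening constants; you need the structural observation that a large component group in $\GL_3(C)$ forces $C^3$ to be irreducible under $G_0$, hence $U = \{e\}$, and that the extremal finite groups then sit inside a single torus of degree~$1$. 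That dichotomy is the actual content of the proof, and it is what your proposal is missing.

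One smaller point: the paper works with the Yau--Yu classification of finite subgroups of $\SL_3(C)$ rather than $\mathrm{PGL}_3(C)$, and notices that some groups of order up to $1080$ contain $\omega I_3$ and hence drop to $360$ after quotienting by $\Gamma \cap Z_3$. Your idea of passing directly to the image in $\mathrm{PGL}_3(C)$ is a legitimate reformulation of the same step, so that part of the plan is sound once you restrict to the correct subcase.
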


\begin{remark}
  A sharper bound for Theorem~\ref{thm:general_bound} is given by~\eqref{eq:tighter_bound} in Section~\ref{sec:pf2}.
\end{remark}

\begin{remark}\label{rem:optimal}
  Let us show that the bound in Theorem~\ref{thm:general_bound} is qualitatively optimal by presenting a single-exponential lower bound.
  Fix a positive integer $n$.
  Let $D$ and $P$ be the group of all diagonal matrices and the group of all permutation matrices in this basis, respectively.
  Since $P$ normalizes $D$, their product $G := P\cdot D \subset \GL_n(C)$ is an algebraic group~\cite[\S 3 and Theorem~3 on p. 102]{VinbergOnishik}.
  One can show that, since $G^\circ$ is a maximal torus in $\GL_n(C)$, the only possible toric envelope of $G$ is $G$ itself.
  Since $P \cap D = \{ e \}$, the number of connected components of $G$ is equal to $|P| = n!$.
  Since $G^\circ = D$, every component has 
  degree $\deg D = 1$.
  Thus, we obtain a single-exponential lower bound
  \[
    \deg G = n! = n^{\mathrm{O}(n)}.
  \]
  The same example gives a single-exponential lower degree bound for a proto-Galois group (see Section~\ref{sec:proto}) as well.
\end{remark}


\section{Application to Hrushovski's algorithm}
\label{sec:proto}

Hrushovski's algorithm for computing the differential Galois group~\cite{Hrushovski} of a linear differential equation of order $n$ consists of the following three steps as outlined in~\cite[Section~1]{Feng}
\begin{enumerate}
  \item\label{step:proto} Computing a proto-Galois group of the differential Galois group of the equation using an a priori upper bound for the degrees of the defining equations.
  \item Compute the toric part using the algorithm by Compoint and Singer~\cite{CompointSinger}.
  \item Compute the finite part.
\end{enumerate}

In this section, we show (Lemma~\ref{lem:proto}) that every \hyperlink{def_envelope}{toric envelope} of an algebraic group $G \subset \GL_n(C)$ is a proto-Galois group of $G$.  It follows that the bounds from Theorems~\ref{thm:general_bound}, \ref{thm:n2}, and~\ref{thm:n3} can be used in the first step of Hrushovski's algorithm instead of the bound given in~\cite[Proposition~B.11]{Feng}.

Feng~\cite[Definition~1.1]{Feng} defined a proto-Galois group as follows.

\begin{definition}[proto-Galois group]\label{def:proto}
  Let $G \subset \GL_n(C)$ be an algebraic group.
  An algebraic group $H \subset \GL_n(C)$ is called a \emph{proto-Galois group} of $G$ if
  \[
    (H^\circ)^{t} \unlhd G^\circ \subset G \subset H,
  \]
  where $(H^\circ)^t$ denotes the intersection of the kernels of all characters of $H^\circ$.
\end{definition}

\begin{lemma}\label{lem:proto}
  If $H \subset \GL_n(C)$ is a \hyperlink{def_envelope}{toric envelope} of $G \subset \GL_n(C)$, then $H$ is a proto-Galois group of $G$.
\end{lemma}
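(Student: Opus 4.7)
The plan is to verify each part of the chain $(H^\circ)^t \unlhd G^\circ \subset G \subset H$ in Definition~\ref{def:proto} in turn. The inclusions $G^\circ \subset G$ and $G \subset H = T \cdot G$ are immediate from the definitions, so the real content is to show $(H^\circ)^t \subset G^\circ$ together with the normality.

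My first step is to pin down the connected component $H^\circ$. Using the facts stated in the remark immediately after the definition of toric envelope (namely, that $T$ normalizes $G^\circ$ and that $G$ normalizes $T \cdot G^\circ$), I would observe that $T \cdot G^\circ$ is a closed connected subgroup of $H$: closed because one factor normalizes the other, and connected as the image of the multiplication map from the irreducible variety $T \times G^\circ$. It is of finite index in $H$, since the normalization property $g(TG^\circ) = (TG^\circ)g$ forces the left cosets of $T \cdot G^\circ$ in $T \cdot G$ to be indexed by a quotient of the finite group $G/G^\circ$. Consequently $H^\circ = T \cdot G^\circ$.

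The central step is then to prove $(H^\circ)^t \subset G^\circ$. Since $G^\circ$ is normalized by both $T$ and $G^\circ$, it is normal in $H^\circ = T \cdot G^\circ$, so $H^\circ / G^\circ$ is an algebraic group; by the second isomorphism theorem it is isomorphic to $T / (T \cap G^\circ)$. A quotient of a torus by any closed subgroup is diagonalizable, hence its characters separate points. Thus for any $h \in H^\circ \setminus G^\circ$ I can find a character $\bar\chi$ of $H^\circ / G^\circ$ that is nontrivial on $hG^\circ$; pulling back along the quotient projection produces a character of $H^\circ$ nontrivial on $h$, which shows $h \notin (H^\circ)^t$.

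Finally, the normality $(H^\circ)^t \unlhd G^\circ$ comes for free: each character kernel is normal in $H^\circ$, so the intersection $(H^\circ)^t$ is normal in $H^\circ$ and a fortiori in the subgroup $G^\circ$. I do not foresee any real obstacle; the only delicate point is the identification $H^\circ = T \cdot G^\circ$ via the two normalization claims supplied in the preliminaries, and once that is in hand the remainder is an elementary consequence of the separating-characters property of diagonalizable groups.
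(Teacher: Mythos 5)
Your proof is correct, but it takes a genuinely different route from the paper's. The paper invokes \cite[Lemma~2.1]{SingerModuli}, which identifies $(H^\circ)^t$ as the subgroup of $H^\circ$ generated by all of its unipotent elements, and then uses Lemma~\ref{lem:def_TE}(1) (same unipotents in $H^\circ$ and $G^\circ$) to conclude in one line that $(H^\circ)^t$ is the subgroup of $G^\circ$ generated by unipotents, which is automatically normal in $G^\circ$. Your argument instead pins down $H^\circ = T\cdot G^\circ$ via the normalization facts from Remark~2.1, shows $H^\circ/G^\circ \cong T/(T\cap G^\circ)$ is diagonalizable, and uses the fact that characters of a diagonalizable group separate points; this gives $(H^\circ)^t \subset G^\circ$ directly from the definition, and normality follows since $(H^\circ)^t \unlhd H^\circ \supset G^\circ$. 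Your route is more self-contained in the sense that it avoids the external characterization of $(H^\circ)^t$; the trade-off is that you need the identification $H^\circ = T\cdot G^\circ$, which rests on the (unproven, merely stated) assertions in Remark~2.1 that $T$ normalizes $G^\circ$ and $G$ normalizes $T\cdot G^\circ$. The paper's route bypasses this by appealing only to the "same unipotents" property, which is already established in Lemma~\ref{lem:def_TE}. Both proofs are sound; yours exposes more of the group-theoretic structure, while the paper's is shorter and more robust in that it does not depend on the remark.
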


\begin{proof}
  Since $H$ is a toric envelope of $G$, $G\subset H$, so inclusions $G^\circ \subset G \subset H$ from Definition~\ref{def:proto} hold.
  \cite[Lemma~2.1]{SingerModuli} implies that $(H^\circ)^t$ is exactly the subgroup of $H$ generated by all unipotent elements of $H^\circ$.
  By Lemma~\ref{lem:def_TE}, it coincides with the subgroup of $G^\circ$ generated by all unipotents in $G^\circ$, and such a subgroup is normal in $G^\circ$.
\end{proof}

\begin{corollary}
  Let $C$ be an algebraically closed field of characteristic zero.
  For every linear algebraic group $G \subset \GL_n(C)$
  \begin{itemize}
    \item there exists a proto-Galois group $H$  bounded by $(4n)^{3n^2}$;
    \item if $n = 2$, there exists a proto-Galois group $H$  bounded by $6$;
    \item if $n = 3$, there exists a proto-Galois group $H$  bounded by $360$.
  \end{itemize}
\end{corollary}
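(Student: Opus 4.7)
The plan is very short because the corollary is essentially a direct combination of two ingredients already available in the paper: the degree bounds for toric envelopes established in Theorems~\ref{thm:general_bound}, \ref{thm:n2}, and~\ref{thm:n3}, and Lemma~\ref{lem:proto} stating that any toric envelope is automatically a proto-Galois group. My proof would handle each of the three bullets in exactly the same way.

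First, given a linear algebraic group $G \subset \GL_n(C)$, I would apply Theorem~\ref{thm:general_bound} to produce a toric envelope $H$ of $G$ of degree at most $(4n)^{3n^2}$; by the proposition immediately following the definition of \emph{bounded}, this degree bound guarantees that $H$ is cut out by polynomial equations of degree at most $(4n)^{3n^2}$, i.e.\ $H$ is bounded by that number. For $n = 2$, I would instead invoke Theorem~\ref{thm:n2} to obtain an $H$ bounded by $6$, and for $n = 3$, Theorem~\ref{thm:n3} gives the degree bound $360$ and hence the corresponding bounding property.

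Second, in each case I would feed the toric envelope $H$ just produced into Lemma~\ref{lem:proto}, which immediately upgrades the statement ``$H$ is a toric envelope of $G$'' to ``$H$ is a proto-Galois group of $G$'' in the sense of Definition~\ref{def:proto}. Since the bounding property depends only on the defining equations of $H$, it is preserved in this reinterpretation; hence $H$ is a proto-Galois group with exactly the same bound.

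There is no genuine obstacle here: the work is all contained in the referenced theorems and lemma, so the argument is a one-line citation chain for each of the three cases. The only thing to be careful about is ensuring that the $H$ supplied by Theorems~\ref{thm:general_bound}, \ref{thm:n2}, and~\ref{thm:n3} satisfies the full definition of a toric envelope (so that Lemma~\ref{lem:proto} applies verbatim), which it does by construction.
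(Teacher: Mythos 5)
Your proposal is correct and matches the paper's intended argument: the corollary is an immediate consequence of combining Theorems~\ref{thm:general_bound}, \ref{thm:n2}, and~\ref{thm:n3} (which each produce a toric envelope with the stated degree/boundedness property) with Lemma~\ref{lem:proto} (which shows that any toric envelope is a proto-Galois group). The paper gives no separate proof because this citation chain is the whole content, exactly as you describe.
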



\section{Proof ingredients}
\label{sec:pf1}

\begin{notation}\label{not:main}
  In what follows we will use the following notation.
  \begin{itemize}[leftmargin=6mm, itemsep=0pt, topsep=1pt]
    \item By $C$ we denote an algebraically closed field of characteristic zero.
    \item We denote the set of all $n \times n$ (resp., $n \times m$) matrices over $C$ by $\Mat_n(C)$ (resp., $\Mat_{n, m}(C)$).
    \item We denote the subgroup of all scalar matrices in $\GL_n(C)$ by $Z_n \subset \GL_n(C)$.
    \item\hypertarget{N}{}   For a subset $X \subset \Mat_n(C)$, 
    we denote the normalizer and centralizer subgroups of $X$
      by $N(X)$ and $Z(X)$, respectively.
     \item For a subgroup $G \subset \GL_n(C)$, we denote the center by $C(G)$ and the connected component of the identity by $G^\circ$.
     \item For a Lie subalgebra $\mathfrak{u} \subset \mathfrak{gl}_n(C)$, we denote the normalizer and centralizer subalgebras 
      by $\mathfrak{n}(\mathfrak{u})$ and $\mathfrak{z}(\mathfrak{u})$, respectively.
        \item\hypertarget{Jordan}{} For a positive integer $n$, $J(n)$ is the minimal number such that every finite subgroup of $\GL_n(C)$ contains a normal abelian subgroup of index at most $J(n)$. 
  We will use Schur's bound~\cite[Theorem~36.14]{CurtisReiner} 
  \begin{equation}\label{eq:Jordan}
  J(n) \leqslant \big(\sqrt{8n} + 1\big)^{2n^2} - \big(\sqrt{8n} - 1\big)^{2n^2}
  \end{equation}
  \item\hypertarget{A}{} For a positive integer $n$, $A(n)$ is the maximal size of a finite abelian subgroup of $\GL_n(\mathbb{Z})$.
  Some known values are $A(1) = 2$, $A(2) = 6$ (see~\cite[p. 180]{Newman}), and $A(3) = 12$ (see~\cite[p. 170]{Tahara}), a general upper bound is given by Lemma~\ref{lem:An}.
  \end{itemize}
\end{notation}

\subsection{Auxiliary lemmas}

\begin{lemma}\label{lem:def_TE}
An algebraic group $H \subset \GL_n(C)$ is a \hyperlink{def_envelope}{toric envelope} of an algebraic group $G \subset \GL_n(C)$ if and only if
\begin{enumerate}[itemsep=-3pt, topsep=1pt]
  \item\label{property:unip} $H^\circ$ and $G^\circ$ have the same set of unipotents;
  \item\label{property:incl} $H = G\cdot H^\circ$.
\end{enumerate}
\end{lemma}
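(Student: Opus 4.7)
My plan is to prove the equivalence by arguing each direction separately; the $(\Rightarrow)$ direction is largely structural while the $(\Leftarrow)$ direction is the real content and requires passing to the reductive quotient of $H^\circ$.

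For the forward direction, I would assume $H = T \cdot G$ for a torus $T$ and derive both conditions. Property~(\ref{property:incl}) is immediate: $T$ is connected so $T \subset H^\circ$, and since $H^\circ$ is normal in $H$, $H = TG \subset H^\circ G = G H^\circ$, with the reverse inclusion trivial. For property~(\ref{property:unip}), I would first argue $H^\circ = T \cdot G^\circ$: using the normalization statement from the remark following the definition of toric envelope ($T$ normalizes $G^\circ$), the product $T G^\circ$ is a connected closed subgroup of $H^\circ$, and a dimension count via the dominant multiplication map $T \times G \to H$ (giving $\dim H = \dim T + \dim G - \dim(T \cap G)$) shows $\dim (T G^\circ) = \dim H^\circ$, forcing equality. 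Consequently $H^\circ/G^\circ$ is a quotient of the torus $T$ and so is itself a torus, containing no nontrivial unipotent elements. Since morphisms of algebraic groups send unipotents to unipotents, any unipotent of $H^\circ$ must project to the identity in $H^\circ/G^\circ$ and hence lie in $G^\circ$; the reverse inclusion is clear.

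The reverse direction is the main obstacle. Given (\ref{property:unip}) and (\ref{property:incl}), my plan is to pass to the reductive quotient of $H^\circ$, identify there a central torus that complements the image of $G^\circ$, and lift it back via a Levi subgroup. Let $R := R_u(H^\circ)$; by (\ref{property:unip}), $R \subset G^\circ$. Write $\pi \colon H^\circ \twoheadrightarrow \bar H := H^\circ/R$ (connected reductive) and $\bar G := \pi(G^\circ)$. I would first show every unipotent of $\bar H$ lies in $\bar G$: any unipotent $\bar u \in \bar H$ lifts to some $u \in H^\circ$ whose Jordan semisimple part satisfies $\pi(u_s) = e$ by uniqueness of Jordan decomposition in $\bar H$, so $u_s \in R$; as $R$ is unipotent and $u_s$ is semisimple, $u_s = e$, making $u$ itself unipotent. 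Then (\ref{property:unip}) places $u \in G^\circ$ and hence $\bar u \in \bar G$.

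Since $\bar H$ is connected reductive, its derived subgroup $[\bar H,\bar H]$ is semisimple and generated by its unipotent elements, so $[\bar H,\bar H] \subset \bar G$. The standard decomposition $\bar H = Z(\bar H)^\circ \cdot [\bar H, \bar H]$ then yields $\bar H = \bar T \cdot \bar G$ with $\bar T := Z(\bar H)^\circ$ a torus. To lift, I would pick any Levi subgroup $L \subset H^\circ$ (available in characteristic zero by Mostow's theorem), observe that $L \xrightarrow{\sim} \bar H$, and let $T \subset L$ be the preimage of $\bar T$, a torus in $H^\circ$. For any $h \in H^\circ$, decomposing $\pi(h) = \bar t \bar g$ and taking lifts $t \in T$, $g \in G^\circ$ gives $t^{-1} h g^{-1} \in \ker \pi = R \subset G^\circ$, so $h \in T G^\circ$; thus $H^\circ = T G^\circ$. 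Combining with (\ref{property:incl}) and normality of $H^\circ$ in $H$: $H = G H^\circ = H^\circ G = T G^\circ G = T G$. The genuinely delicate ingredient is the unipotent-lifting argument in the reductive quotient, which uses condition (\ref{property:unip}) in an essential way; everything else is structural bookkeeping.
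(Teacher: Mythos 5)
Your argument is correct, and in the reverse direction it is essentially the paper's proof reorganized. The paper applies the Levi decomposition $H = H_0\ltimes U$ directly, writes $H_0^\circ = ST$ with $T = C(H_0^\circ)^\circ$ a torus and $S = [H_0^\circ,H_0^\circ]$ semisimple, notes that $U$ and $S$ are generated by unipotent elements of $H^\circ$ and hence by condition~(1) lie in $G^\circ$, and closes via $TG = TG^\circ G \supset TSUG = H^\circ G = H$. You instead pass to the reductive quotient $\bar H = H^\circ/R$ (with $R$ the unipotent radical of $H^\circ$), locate the central torus $\bar T$ there, and pull it back through a Levi subgroup of $H^\circ$; the torus $T$ you obtain is exactly the paper's $C(H_0^\circ)^\circ$ for that choice of Levi, so the content coincides, though your phrasing makes the role of condition~(1), namely ensuring unipotents of the reductive quotient land in $\bar G$, a bit more explicit.

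Your forward direction, however, departs from the paper's and has a dependency worth flagging. You deduce condition~(1) by observing that $H^\circ/G^\circ$ is a torus, being a quotient of $T$; this presupposes that $T$ normalizes $G^\circ$, a fact you import from the Remark following the definition of toric envelope. The paper states that fact without proof and does not use it at this point: its forward argument instead writes a unipotent $A\in H^\circ = TG^\circ$ as $A = BC$ with $B\in T$ semisimple and $C\in G^\circ$, and invokes closedness of $G^\circ$ under Jordan decomposition (\cite[Theorem~6, p.~115]{VinbergOnishik}) to conclude $A\in G^\circ$. Your route is arguably cleaner once the normalization statement is available, but as written it rests on an unproved assertion; to be self-contained you should either supply a proof that $T$ normalizes $G^\circ$ (taking care not to appeal circularly to the present lemma) or replace this step with a Jordan-decomposition argument in the paper's style.
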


\begin{proof}
  Let $H$ be a toric envelope of $G$. 
  Then there exists torus $T \subset \GL_n(C)$ such that $H = T\cdot G$.
  Since $T$ is connected, $T \subset H^\circ$, so $H \supset G \cdot H^{\circ} \supset G \cdot T = H$, so~\eqref{property:incl} holds.
  Consider any unipotent element $A \in H^\circ$. 
  Since $H^\circ = T \cdot G^\circ$, then there are $B \in T$ and $C \in G^\circ$ such that $A = BC$.
  Since $T$ is a torus, $B$ is a semisimple element.
  Then $C = B^{-1}A$ is a Jordan-Chevalley decomposition of $C$.
  By~\cite[Theorem~6, p. 115]{VinbergOnishik}, $A, B \in G^\circ$.
  Thus, every unipotent element of $H^\circ$ belongs to $G^\circ$.
  Since also $G\subset H$, \eqref{property:unip} holds.
  
  Assume that properties~\eqref{property:unip} and~\eqref{property:incl} hold for $G$ and $H$.
  Let $H = H_0 \ltimes U$ be a Levi decomposition of $H$ (see~\cite[Theorem~4, p. 286]{VinbergOnishik}).
    By \cite[Lemma~10.10]{Wehrfritz}, $H_0$ can be written as a product $\Gamma H_0^\circ$ for some finite group $\Gamma \subset \GL_n(C)$.  
  \cite[Proposition, p. 181]{Borel} implies that $H_0^\circ$ can be written as $S T$, where $T := C(H_0^\circ)^\circ$ is a torus and $S := [H_0^\circ, H_0^\circ]$ is semisimple.
  Since $\Gamma$ normalizes $H_0$ and the center is a characteristic subgroup, $\Gamma$ normalizes $T$.
  Since $U$ and $S$ are generated by unipotents, $U, S \subset G^\circ$.
  Then
  \[
    H \supset T \cdot G = T \cdot G^\circ \cdot G \supset T \cdot S \cdot U \cdot G = H^\circ \cdot G = H, 
  \]
  so $H = T \cdot G$ and $H$ is a toric envelope of $G$.\qedhere
 \end{proof}

\begin{corollary}\label{cor:transitive}
  If $H_2 \subset \GL_n(C)$ is a \hyperlink{def_envelope}{toric envelope} of $H_1 \subset \GL_n(C)$ and $H_1$ is a  toric envelope of $H_0 \subset \GL_n(C)$, then $H_2$ is a toric envelope of $H_0$.
\end{corollary}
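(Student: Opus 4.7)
The plan is to reduce the statement to the characterization of toric envelopes established in Lemma~\ref{lem:def_TE}, namely that $H$ is a toric envelope of $G$ if and only if (i) $H^\circ$ and $G^\circ$ have the same set of unipotent elements, and (ii) $H = G \cdot H^\circ$. With this characterization in hand, I expect the transitivity to reduce to two short verifications, with no need to construct an explicit torus witnessing $H_2 = T \cdot H_0$ directly (which would be the main obstacle if one worked from the raw definition, since the product of two tori need not immediately present itself as the single torus demanded).

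For the first condition, I would argue as follows. By hypothesis and Lemma~\ref{lem:def_TE}, $H_2^\circ$ and $H_1^\circ$ share the same set of unipotents, and $H_1^\circ$ and $H_0^\circ$ share the same set of unipotents. Transitivity of equality of sets then immediately gives that $H_2^\circ$ and $H_0^\circ$ have the same unipotents.

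For the second condition, I would use $H_2 = H_1 \cdot H_2^\circ$ and $H_1 = H_0 \cdot H_1^\circ$. The small observation I need is that $H_1^\circ \subset H_2^\circ$: this holds because $H_1 \subset H_2$ forces $H_1^\circ$ to be a connected subset of $H_2$ containing the identity, hence contained in $H_2^\circ$. Then
\[
H_2 \;=\; H_1 \cdot H_2^\circ \;=\; (H_0 \cdot H_1^\circ) \cdot H_2^\circ \;=\; H_0 \cdot (H_1^\circ \cdot H_2^\circ) \;=\; H_0 \cdot H_2^\circ,
\]
where the last equality uses $H_1^\circ \subset H_2^\circ$ and the fact that $H_2^\circ$ is a subgroup.

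Having verified both conditions of Lemma~\ref{lem:def_TE} for the pair $(H_0, H_2)$, I conclude that $H_2$ is a toric envelope of $H_0$. The proof is essentially mechanical once Lemma~\ref{lem:def_TE} is available; the only conceptual point is the inclusion $H_1^\circ \subset H_2^\circ$, which is standard but worth stating explicitly.
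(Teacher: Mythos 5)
Your proof is correct and follows exactly the same route as the paper: both establish the two conditions of Lemma~\ref{lem:def_TE} for the pair $(H_0, H_2)$, using transitivity of equality of unipotent sets for the first and the inclusion $H_1^\circ \subset H_2^\circ$ for the second. The only cosmetic difference is that the paper writes the product chain as $H_2 = H_2^\circ \cdot H_1 = H_2^\circ \cdot H_1^\circ \cdot H_0 = H_2^\circ \cdot H_0$, which is equivalent to yours since $H^\circ$ is normal in $H$.
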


\begin{proof}
  Lemma~\ref{lem:def_TE} implies that $H_0^\circ$, $H_1^\circ$, and $H_2^\circ$ have the same set of unipotents.
  Since $H_1 \subset H_2$, we have $H_1^\circ \subset H_2^\circ$.
  Together with Lemma~\ref{lem:def_TE} this implies that $H_2 = H_2^\circ\cdot H_1 = H_2^\circ\cdot H_1^\circ\cdot H_0 = H_2^\circ\cdot  H_0$.
  Lemma~\ref{lem:def_TE} implies that $H_2$ is a toric envelope of $H_0$.
\end{proof}


\begin{corollary}\label{cor:TE_reductive}
  Any \hyperlink{def_envelope}{toric envelope} of a reductive group is again a reductive group. 
\end{corollary}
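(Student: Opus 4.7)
The plan is to read off the corollary from Lemma~\ref{lem:def_TE} by tracking the unipotent radical. Recall that $G$ being reductive means, by definition, that the unipotent radical $R_u(G^\circ)$ is trivial, and that to prove the same for $H$ it suffices to show that the unipotent radical $U := R_u(H^\circ)$ is trivial. So the only real work is to force $U$ to live inside a place where $G$'s reductivity can be invoked.

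First I would note that $G^\circ \subset H^\circ$: indeed $G \subset H$ (by the definition of toric envelope), and $G^\circ$ is a connected subset of $H$ containing the identity, so it must be contained in the identity component of $H$. Next, $U$ is by construction a connected subgroup of $H^\circ$ consisting entirely of unipotent elements, so Lemma~\ref{lem:def_TE}\eqref{property:unip} (which says $H^\circ$ and $G^\circ$ have the same set of unipotents) gives $U \subset G^\circ$.

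Now comes the key observation: $U$ is normal in $H^\circ$, and $G^\circ \subset H^\circ$, so conjugation by elements of $G^\circ$ preserves $U$. In other words, $U$ is a connected unipotent normal subgroup of $G^\circ$, hence $U \subset R_u(G^\circ)$. Since $G$ is reductive, $R_u(G^\circ) = \{e\}$, so $U = \{e\}$, i.e., $H^\circ$ has trivial unipotent radical and $H$ is reductive.

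There is no real obstacle; the content is already packaged in Lemma~\ref{lem:def_TE}. The only thing to be mildly careful about is the definitional convention that reductivity refers to the identity component (so that non-connected groups such as those arising in Example~\ref{ex:dihedral} are permitted), and to observe explicitly that a normal subgroup of $H^\circ$ that happens to sit inside the subgroup $G^\circ$ is automatically normal in $G^\circ$, which is what lets us transfer the reductivity hypothesis from $G$ to $H$.
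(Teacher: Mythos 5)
Your proof is correct and follows essentially the same route as the paper: both use Lemma~\ref{lem:def_TE}\eqref{property:unip} to push the unipotent radical of $H^\circ$ into $G^\circ$ and then invoke reductivity of $G$. Your version is slightly more explicit (and arguably more complete) than the paper's, which leaves unstated the observation you spell out — that a normal subgroup of $H^\circ$ sitting inside $G^\circ \subset H^\circ$ is automatically normal in $G^\circ$, which is exactly what is needed to conclude $U \subset R_u(G^\circ) = \{e\}$.
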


\begin{proof}
  Let $G$ be a reductive group and $H$ be a toric envelope of $G$.
  Assume that $H$ is not reductive.  Then it contains a nontrivial connected normal unipotent subgroup $U$.
  Since $G^\circ$ and $H^\circ$ have the same unipotents, $U \subset G^\circ$.
  This contradicts the reductivity of $G$.
\end{proof}


\begin{corollary}\label{cor:scalar}
  Let $G$ be an algebraic subgroup of $\GL_n(C)$.
  Then every \hyperlink{def_envelope}{toric envelope} of $GZ_n$ is a  toric envelope of $G$.
\end{corollary}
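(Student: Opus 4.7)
The plan is to reduce the statement to the transitivity property already established in Corollary~\ref{cor:transitive}. Specifically, I would first observe that $GZ_n$ is itself a toric envelope of $G$, and then apply transitivity.

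For the first observation, note that $Z_n$ — the subgroup of scalar matrices in $\GL_n(C)$ — is a torus: it is commutative, isomorphic as an algebraic group to $\GL_1(C)$ (hence connected), and every scalar matrix is of course diagonalizable. Therefore, writing $GZ_n = Z_n \cdot G$ exhibits $GZ_n$ as a product of a torus and $G$, which is exactly the definition of a toric envelope. Thus $GZ_n$ is a toric envelope of $G$.

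For the second step, suppose $H$ is any toric envelope of $GZ_n$. Combining this with the fact that $GZ_n$ is a toric envelope of $G$, Corollary~\ref{cor:transitive} immediately yields that $H$ is a toric envelope of $G$, which is the desired conclusion.

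There is no real obstacle here — the corollary is essentially a direct application of the fact that the toric envelope relation is transitive (Corollary~\ref{cor:transitive}) together with the observation that adjoining scalar matrices is already a toric operation. The only thing worth being careful about is verifying that $Z_n$ satisfies all three defining properties of a torus (commutative, connected, diagonalizable elements), which is immediate.
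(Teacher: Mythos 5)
Your proof is correct and takes the approach the paper clearly intends: since $Z_n$ is a torus centralizing $G$, the group $GZ_n = Z_n\cdot G$ is by definition a toric envelope of $G$, and the conclusion then follows immediately from the transitivity established in Corollary~\ref{cor:transitive}. The paper states the corollary without proof precisely because this two-step observation is immediate.
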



\begin{lemma}\label{lem:product}
  Let $G \subset \GL_n(C)$ be an algebraic group such that $Z_n \subset G$.
  Then $G = Z_n \cdot (G \cap \SL_n(C))$.
\end{lemma}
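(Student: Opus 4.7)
The plan is to prove the nontrivial inclusion $G \subset Z_n \cdot (G \cap \SL_n(C))$ by an explicit factorization; the reverse containment is immediate since $Z_n \subset G$ and $G \cap \SL_n(C) \subset G$ by hypothesis.

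Take an arbitrary $g \in G$ and set $d := \det g \in C^\ast$. Since $C$ is algebraically closed, I can pick $c \in C^\ast$ with $c^n = d$. Then $cI_n \in Z_n \subset G$, so $s := c^{-1}g = (cI_n)^{-1}g$ is again an element of $G$. By construction $\det s = c^{-n}d = 1$, hence $s \in G \cap \SL_n(C)$, and the factorization $g = (cI_n)\cdot s$ lies in $Z_n \cdot (G \cap \SL_n(C))$.

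There is essentially no obstacle here: the whole argument rests on the fact that a determinant in an algebraically closed field admits an $n$-th root, which lets one peel off a scalar matrix to normalize the determinant to $1$. No appeal to the earlier machinery (toric envelopes, Levi decomposition, Lie correspondence) is required, and the lemma is really a structural remark that will be used later to reduce questions about subgroups containing $Z_n$ to subgroups of $\SL_n(C)$.
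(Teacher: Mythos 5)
Your proof is correct and follows exactly the paper's argument: pick an $n$-th root $c$ of $\det g$ (which exists since $C$ is algebraically closed), observe that $c^{-1}g \in G \cap \SL_n(C)$ because $cI_n \in Z_n \subset G$, and factor $g = (cI_n)(c^{-1}g)$. Nothing to add.
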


\begin{proof}
  Let $A \in G$. 
  Since $Z_n \subset G$, $\frac{1}{\sqrt[n]{\det A}} A \in G \cap \SL_n(C)$.
  Thus, $A \in Z_n \cdot (G \cap \SL_n(C))$.
\end{proof}

The following geometric lemma is a modification of~\cite[Lemma~3]{JeronimoSabia}.

\begin{lemma}\label{lem:degree_intersection}
  Let $X \subset \mathbb{A}^N$ be an algebraic variety of dimension $\leqslant d$ and degree $D$.
  Consider polynomials $f_1, \ldots, f_M \in C[\mathbb{A}^N]$ such that $\deg f_i \leqslant D_{1}$ for every $1 \leqslant i \leqslant M$.
  Then the sum of the degrees of the components of $Y := X \cap \{f_1 = \ldots = f_M = 0\}$ of dimension $\geqslant d'$ does not exceed $D \cdot D_1^{d - d'}$.
\end{lemma}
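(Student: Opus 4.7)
The plan is to reduce to the equidimensional case and then carry out a double induction based on the classical Heintz-Bezout inequality ($\deg(Z \cap H) \leq \deg Z \cdot \deg H$ when the hypersurface $H$ contains no irreducible component of $Z$). Writing $\bar f := (f_1, \ldots, f_M)$ for brevity, first I would decompose $X = \bigcup_{m = 0}^{d} X^{(m)}$, where $X^{(m)}$ is the union of the irreducible components of $X$ of dimension $m$, so that each $X^{(m)}$ is equidimensional and $\sum_m \deg X^{(m)} = D$. Any irreducible component of $Y$ of dimension $\geq d'$ lies in at least one $X^{(m)}$ and, by maximality as a subvariety of $X \cap V(\bar f)$, must appear as a component of $X^{(m)} \cap V(\bar f)$ for that $m$. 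Summing degrees over $m$, it therefore suffices to prove the sharper equidimensional bound
\[
  \deg^{\geq d'}\bigl(W \cap V(\bar f)\bigr) \leq e \cdot D_1^{m - d'}
\]
for every equidimensional variety $W$ of dimension $m$ and degree $e$; combining this with $D_1 \geq 1$ (the case $D_1 = 0$ being trivial) recovers the original bound $D \cdot D_1^{d - d'}$.

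For the equidimensional claim I would induct on the pair $(m - d', \text{number of irreducible components of } W)$ in lexicographic order. The base case $m = d'$ is immediate: the only components of $W \cap V(\bar f)$ of dimension $\geq d'$ are those components of $W$ on which every $f_i$ vanishes identically, of total degree at most $e$. In the inductive step $m > d'$, if every $f_i$ vanishes on every component of $W$, then $W \cap V(\bar f) = W$ and the bound $e \leq e \cdot D_1^{m - d'}$ holds trivially. Otherwise I would choose some index $j$ and a component of $W$ on which $f_j$ does not vanish, and split $W = W' \cup W''$ according to whether $f_j$ vanishes on a given component. Because $V(\bar f) \subset V(f_j)$, one has
\[
  W \cap V(\bar f) = \bigl(W' \cap V(\bar f)\bigr) \cup \bigl((W'' \cap V(f_j)) \cap V(\bar f)\bigr).
\]
To the first piece I would apply the inductive hypothesis in the inner variable: $W'$ is equidimensional of dimension $m$ with strictly fewer components than $W$, contributing at most $\deg W' \cdot D_1^{m - d'}$. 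To the second piece I would first invoke Heintz-Bezout to note that $W'' \cap V(f_j)$ is equidimensional of dimension $m - 1$ and degree at most $\deg W'' \cdot D_1$, and then apply induction in the outer variable, contributing at most $\deg W'' \cdot D_1 \cdot D_1^{m - 1 - d'} = \deg W'' \cdot D_1^{m - d'}$. Adding the two estimates recovers exactly $(\deg W' + \deg W'') \cdot D_1^{m - d'} = e \cdot D_1^{m - d'}$.

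The main obstacle will be the bookkeeping in this last step: the Heintz-Bezout inequality requires the intersecting hypersurface to contain no component of the variety, so one has to peel off the components on which the chosen $f_j$ vanishes before intersecting. It is precisely this split that makes the induction close — $W'$ is strictly simpler in the inner variable while $W'' \cap V(f_j)$ is strictly simpler in the outer variable — and a naive single induction on either variable alone does not suffice, since a polynomial that vanishes on some but not all components of the current variety would neither drop its dimension nor reduce its component count. A minor subtlety, handled during the reduction step, is to verify that an irreducible component of $Y$ really appears as a \emph{component} (and not merely a proper subvariety) of the relevant $X^{(m)} \cap V(\bar f)$, which is ultimately a maximality argument inside $Y$.
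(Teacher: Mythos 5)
Your argument is correct and uses the same core idea as the paper's proof: Bezout's inequality applied to a polynomial that does not vanish identically on the component at hand, together with a descent on the dimension gap. The paper packages this as a single induction on $d - d'$, reducing to irreducible $X$ inside each inductive step, which renders both your preliminary equidimensional decomposition and the lexicographic double induction on the dimension gap and the number of components unnecessary --- but this is a difference of organization rather than of substance (and your explicit remark on the case $D_1 = 0$ is a small point the paper leaves implicit).
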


\begin{proof}
  We will prove the lemma by induction on $d - d'$.
  The base case is $d = d'$. 
  In this case, the set of components of $Y$ of dimension at least $d'$ is a subset of the set of components of $X$ of dimension $d$, and the sum of their degrees is at most $\deg X = D$.

  Let $d > d'$.
  Considering every component of $X$ separately, we may reduce to the case that $X$ is irreducible.
  If every $f_i$ vanishes on $X$, then $X = Y$ and the only component of $Y$ of dimension $\geqslant d'$ has degree $D$.
  Otherwise, assume that $f_1$ does not vanish everywhere on $X$.
  Then $\dim X \cap V(f_1) \leqslant d - 1$ and $\deg (X \cap V(f_1)) \leqslant DD_1$ by~\cite[Theorem 7.7, Chapter 1]{Hartshorne}.
  Applying the induction hypothesis to $X \cap V(f_1)$ and the same $d'$, we show that the sum of the degrees of the components of $Y$ of dimension at least $d'$ is at most  \[
  \deg (X \cap V(f_1)) D_1^{d - 1 - d'} \leqslant D D_1^{d - d'}.\qedhere
  \]
\end{proof}


\begin{corollary}\label{cor:intersect_with_normalizer}
  For every collection of algebraic subgroups  $G_0, G_1, \ldots, G_k \subset \GL_n(C)$
  \[
  \deg G \leqslant n^{\dim G_0 - \dim G} \deg G_0, \text{ where } G := G_0 \cap \hyperlink{N}{N}(G_1) \cap \ldots \cap \hyperlink{N}{N}(G_k).
  \]
\end{corollary}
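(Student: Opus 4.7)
The plan is to apply Lemma~\ref{lem:degree_intersection} to $X = G_0$ with $D_1 = n$, which will give the bound $\deg G \leq \deg G_0 \cdot n^{\dim G_0 - \dim G}$ provided the extra cutting equations have degree at most $n$. Since the defining equations of an arbitrary $N(G_j)$ are not of degree $n$ in general, I will instead work with the larger Lie-algebra normalizer
\[
N(\mathfrak{g}_j) := \{g \in \GL_n(C) : g \mathfrak{g}_j g^{-1} = \mathfrak{g}_j\}, \qquad \mathfrak{g}_j := \Lie(G_j).
\]
This is an algebraic subgroup of $\GL_n(C)$ containing $N(G_j)$, so setting $G' := G_0 \cap N(\mathfrak{g}_1) \cap \cdots \cap N(\mathfrak{g}_k)$ gives $G \subseteq G'$.

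The main calculation is that each $N(\mathfrak{g}_j)$ is cut out in $\GL_n(C)$ by polynomials of degree at most $n$. Pick a basis $e_1, \ldots, e_s$ of $\mathfrak{g}_j$ and linear functionals $\ell_1, \ldots, \ell_r$ on $\Mat_n(C)$ whose common kernel is $\mathfrak{g}_j$. Then $g \in N(\mathfrak{g}_j)$ iff $\ell_a(g e_i g^{-1}) = 0$ for all $a, i$. Writing $g^{-1} = (\det g)^{-1} \mathrm{adj}(g)$, the matrix $g e_i \mathrm{adj}(g)$ has entries of degree $n$ in the entries of $g$ (degree $1$ from $g$ times degree $n-1$ from the adjugate). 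Hence on $\GL_n(C)$ the vanishing of $\ell_a(g e_i g^{-1})$ is equivalent to the vanishing of the degree-$n$ polynomial $\ell_a(g e_i \mathrm{adj}(g))$. Lemma~\ref{lem:degree_intersection} applied to $X = G_0$ with these polynomials then yields $\deg G' \leq \deg G_0 \cdot n^{\dim G_0 - \dim G'}$.

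The remaining step is to transfer this bound from $G'$ to $G$, by showing that $G$ is a union of connected components of $G'$. The key observation is that $\Lie(N(H)) = \mathfrak{n}(\Lie(H))$ for every algebraic subgroup $H \subseteq \GL_n(C)$, so $\Lie(N(G_j)) = \mathfrak{n}(\mathfrak{g}_j) = \Lie(N(\mathfrak{g}_j))$. Since taking Lie algebras commutes with intersections of algebraic subgroups, $\Lie(G) = \Lie(G')$. Together with $G \subseteq G'$ this forces $G^\circ = (G')^\circ$, so $G$ is a union of cosets of $(G')^\circ$ in $G'$. Consequently $\dim G = \dim G'$ and $\deg G \leq \deg G'$, and the two inequalities combine to the desired bound. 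The main conceptual subtlety is the possible mismatch between $N(G_j)$ and $N(\mathfrak{g}_j)$ at the level of component groups of $G_j$, which is precisely why the detour through Lie algebras together with the final component argument is needed.
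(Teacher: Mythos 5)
The paper's proof is a one-line citation: it invokes \cite[Lemma~B.4]{Feng}, which says that $N(G_1) \cap \cdots \cap N(G_k)$ itself is cut out in $\GL_n(C)$ by polynomials of degree at most $n$, and then applies Lemma~\ref{lem:degree_intersection} with $D_1 = n$. Your proposal instead tries to re-derive the degree-$n$ bound from scratch by passing to the Lie-algebra normalizers $N(\mathfrak{g}_j)$. The adjugate computation showing that each $N(\mathfrak{g}_j)$ is defined by degree-$n$ polynomials is correct, and is in fact the mechanism underlying Feng's lemma.

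However, the transfer step back from $G' := G_0 \cap \bigcap_j N(\mathfrak{g}_j)$ to $G$ contains a genuine gap. The claim that $\Lie(N(H)) = \mathfrak{n}(\Lie(H))$ for every algebraic subgroup $H \subset \GL_n(C)$ is false when $H$ is disconnected. For instance, take $H = \{I, \diag(1,-1)\} \subset \GL_2(C)$: then $\Lie(H) = 0$, so $\mathfrak{n}(\Lie(H)) = \mathfrak{gl}_2$, but since $\diag(1,-1)$ has distinct eigenvalues, $N(H)$ is exactly the diagonal torus and $\Lie(N(H))$ is the $2$-dimensional diagonal subalgebra. In general one only has the inclusion $\Lie(N(H)) \subseteq \mathfrak{n}(\Lie(H))$. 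As a result, $\Lie(G)$ may be a proper subalgebra of $\Lie(G')$, so $G^\circ \subsetneq (G')^\circ$, and then $G$ is \emph{not} a union of components of $G'$ and $\dim G < \dim G'$. In that situation the bound $\deg G' \leqslant n^{\dim G_0 - \dim G'}\deg G_0$ neither controls $\deg G$ nor has the right exponent, and the argument collapses. (You do flag this mismatch at the end as ``the main conceptual subtlety,'' but the ``key observation'' you offer to resolve it is precisely the false identity.) Your argument does work if one additionally assumes that $G_1, \ldots, G_k$ are connected --- in that case $N(G_j) = N(\mathfrak{g}_j)$ on the nose --- and this happens to cover every use of the corollary later in the paper; but as a proof of the statement for arbitrary algebraic subgroups it is incomplete, whereas the cited Lemma~B.4 of Feng addresses $N(G_j)$ directly.
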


\begin{proof}
  \cite[Lemma~B.4]{Feng} implies that $N(G_1) \cap \ldots \cap N(G_k)$ is defined by polynomials of degree at most $n$.
  Then the statement of the corollary follows from Lemma~\ref{lem:degree_intersection} with $D_1 = n$.
\end{proof}


\begin{lemma}\label{lem:An}
  $\hyperlink{A}{A}(n) \leqslant 2\cdot 3^{[n^2 / 4]}$, where $[x]$ means the integer part of $x$.
\end{lemma}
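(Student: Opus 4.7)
The plan is to analyze the rational representation $\mathbb{Q}^n$ of a finite abelian subgroup $G \subset \GL_n(\mathbb{Z})$, decompose it into simple $\mathbb{Q}[G]$-submodules, and reduce the desired bound to an arithmetic inequality relating $m$ and $\phi(m)$.

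Since $G$ is finite abelian, $\mathbb{Q}[G]$ is semisimple and is a product of cyclotomic fields. Accordingly, as a $\mathbb{Q}[G]$-module, $\mathbb{Q}^n \cong \bigoplus_i V_i^{c_i}$, where the distinct simple summands $V_i$ actually appearing (so $c_i \geq 1$) are of the form $V_i = \mathbb{Q}(\zeta_{m_i})$ with $G$ acting by multiplication through a character $\chi_i \colon G \to V_i^\times$; taking $m_i := |\chi_i(G)|$, I have $\chi_i(G) = \mu_{m_i}$ and $\dim_{\mathbb{Q}} V_i = \phi(m_i)$. Faithfulness of the $G$-action on $\mathbb{Q}^n$ then yields an embedding $G \hookrightarrow \prod_i \mu_{m_i}$, giving $|G| \leq \prod_i m_i$, while comparing dimensions forces $n = \sum_i c_i \phi(m_i)$ and hence $\sum_i \phi(m_i) \leq n$.

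The crux then reduces to the pointwise inequality $m \leq 6^{\phi(m)/2}$ for every positive integer $m$, which is tight at $m = 6$. Granting it, $\prod_i m_i \leq 6^{(\sum_i \phi(m_i))/2} \leq 6^{n/2} = 2^{n/2} \cdot 3^{n/2}$, and the routine numerical check $6^{n/2} \leq 2 \cdot 3^{\lfloor n^2/4 \rfloor}$ for $n \geq 2$ reduces to the elementary bound $\lfloor n^2/4 \rfloor \geq n - 1$ after using $2 \leq 3$; the case $n = 1$ is immediate from $\GL_1(\mathbb{Z}) = \{\pm 1\}$. The main obstacle is thus the arithmetic inequality $m \leq 6^{\phi(m)/2}$: a direct multiplicative argument verifies it at prime powers and combines them via coprime factorization, with care required only for $m = 2m'$ with $m'$ odd (where the naive induction is short by $\log 2$, which must be absorbed using the slack already present in the inequality for any odd $m' \geq 3$). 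Alternatively, one may simply verify $m \leq 6^{\phi(m)/2}$ by hand for $m \leq 6$ and invoke the standard estimate $\phi(m) \geq \sqrt{m}$ valid for $m \geq 7$, which immediately yields the bound.
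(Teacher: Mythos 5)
Your proof is correct and takes a genuinely different route from the paper. The paper reduces to $A_0 := A \cap \SL_n(\mathbb{Z})$ (so $|A| \leqslant 2|A_0|$), uses the theorem from Newman that the reduction map $\SL_n(\mathbb{Z}) \to \SL_n(\mathbb{F}_3)$ is injective on finite subgroups, and then cites Vdovin's classification for the bound $3^{[n^2/4]}$ on abelian subgroups of $\SL_n(\mathbb{F}_3)$. Your argument instead decomposes $\mathbb{Q}^n$ over the semisimple algebra $\mathbb{Q}[G]$ into cyclotomic simple summands $\mathbb{Q}(\zeta_{m_i})$, deduces $|G| \leqslant \prod m_i$ and $\sum \phi(m_i) \leqslant n$ from faithfulness and dimension counting, and then invokes the pointwise inequality $m \leqslant 6^{\phi(m)/2}$ (correct: verify $m \leqslant 6$ directly, and for $m \geqslant 7$ use $\phi(m) \geqslant \sqrt{m}$ together with $\max_x (\ln x)/\sqrt{x} = 2/e < (\ln 6)/2$). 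This is more self-contained---it avoids Vdovin's tables entirely---and in fact yields the much sharper bound $A(n) \leqslant 6^{n/2}$, which is attained for even $n$ by $(\mathbb{Z}/6\mathbb{Z})^{n/2}$ acting block-diagonally via the companion matrix of the sixth cyclotomic polynomial; the paper's $2\cdot 3^{[n^2/4]}$ matches this only at $n=2$ and is exponentially more wasteful for large $n$ (the extreme abelian subgroups of $\SL_n(\mathbb{F}_3)$ used in Vdovin's bound are unipotent and cannot lift to torsion in $\SL_n(\mathbb{Z})$). Your final numerical check $6^{n/2} \leqslant 2 \cdot 3^{[n^2/4]}$ for $n \geqslant 2$ via $2^{n/2-1} \leqslant 3^{n/2-1}$ and $[n^2/4] \geqslant n-1$, together with the trivial case $n=1$, is fine.
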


\begin{proof}
  Consider a finite abelian subgroup $A \subset \GL_n(\mathbb{Z})$.
  Let $A_0 := A \cap \SL_n(\mathbb{Z})$, then $|A| \leqslant 2 |A_0|$.
  Consider the homomorphism $\varphi\colon \SL_n(\mathbb{Z}) \to \SL_n(\mathbb{F}_3)$ defined by reducing modulo $3$.
  \cite[Theorem~\textrm{IX}.8]{Newman} implies that $|A_0| = |\varphi(A_0)|$.
  According to~\cite[Table~2]{Vdovin}, the size of an abelian subgroup  of $\SL_n(\mathbb{F}_3)$ does not exceed $3^{[n^2 / 4]}$.
\end{proof}


\subsection{Degree bound for unipotent groups}\label{sub:unipotent}

   \begin{lemma}\label{lem:unipotent_bound}
    	Let $U \subset \GL_n(C)$ be a connected unipotent group.
        Then 
        \[
        \deg U \leqslant \prod\limits_{k = 1}^{n - 1} k!.
        \]
    \end{lemma}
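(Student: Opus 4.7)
The plan is to parametrize $U$ via the exponential of its Lie algebra, organize the defining equations of $\mathfrak{u}$ by a natural superdiagonal ``depth'' filtration, and then apply Lemma~\ref{lem:degree_intersection} iteratively (one equation at a time), exploiting the fact that low-depth linear equations on $\mathfrak{u}$ pull back to low-degree polynomial equations on the ambient upper triangular unipotent group.

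First, by Kolchin's theorem I can conjugate so that $U$ is contained in the group $\mathcal{U}_n \subset \GL_n(C)$ of upper triangular matrices with all diagonal entries equal to $1$; since conjugation preserves degrees of subvarieties of $\GL_n(C)$, this reduction is harmless. Its Lie algebra $\mathfrak{u}$ then lies in the space $\mathcal{N}_n \subset \mathfrak{gl}_n(C)$ of strictly upper triangular matrices, and the exponential $\exp\colon \mathcal{N}_n \to \mathcal{U}_n$ is a biregular isomorphism with inverse $\log$. The key observation is that, for $A \in \mathcal{U}_n$, the entry $(\log A)_{ij}$ is a polynomial of degree at most $j - i$ in the entries of $A$: indeed, $\log A = \sum_{k=1}^{n-1}(-1)^{k+1}(A-I)^{k}/k$ and the $k$-th power of the strictly upper triangular matrix $A - I$ vanishes at position $(i,j)$ whenever $k > j - i$.

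Next, I stratify $\mathcal{N}_n = V_1 \oplus \cdots \oplus V_{n-1}$ by superdiagonal (so $\dim V_l = n - l$), write $\pi_l\colon \mathcal{N}_n \to V_1 \oplus \cdots \oplus V_l$ for the truncation projection, and set $m_l := \dim \pi_l(\mathfrak{u})$. Since $m_l - m_{l-1} \leq \dim V_l = n - l$, at each depth $l$ one can introduce $r_l := (n - l) - (m_l - m_{l-1}) \leq n - l$ fresh linear equations---each involving only entries $X_{ij}$ with $j - i \leq l$---to cut out $\pi_l(\mathfrak{u})$ inside $V_1 \oplus \cdots \oplus V_l$. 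Substituting $X = \log A$ turns each depth-$l$ equation into a polynomial of degree at most $l$ on $\mathcal{U}_n$, and the common zero locus of all these polynomials in $\mathcal{U}_n$ coincides with $U$ set-theoretically, since $\exp$ is a bijection between $\mathfrak{u}$ and $U$.

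Finally, I intersect $\mathcal{U}_n$ (which has degree $1$) with these equations one at a time, in nondecreasing order of depth, applying Lemma~\ref{lem:degree_intersection} with a single equation at each step. Since $U$ is cut out set-theoretically and the total number of equations equals $\sum_l r_l = \binom{n}{2} - \dim U = \codim_{\mathcal{U}_n} U$, every equation must strictly reduce the dimension (none can be redundant). The iterated Bezout estimate therefore gives
\[
  \deg U \leq \prod_{l=1}^{n-1} l^{r_l} \leq \prod_{l=1}^{n-1} l^{\,n-l} = \prod_{k=1}^{n-1} k!,
\]
where the last identity is the double-product swap $\prod_{k=1}^{n-1}\prod_{l=1}^{k} l = \prod_{l=1}^{n-1} l^{\,n-l}$. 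The main technical point to justify is that each intersection in the iteration is proper; this is forced by the dimension count above, so no additional transversality argument is needed.
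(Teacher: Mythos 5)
Your proof is correct, and it takes a genuinely different route than the paper while landing on the same final bound. The paper's proof passes to the \emph{graph} of the exponential: it works in the product $\mathcal{T} \times \GL_n(C)$, cuts out the graph $W = \{(X,Y) : Y = \varphi(X),\, X \in \Lie U\}$ by linear equations for $X \in \Lie U$ together with all $n^2$ entry equations $Y_{ij} = \varphi(X)_{ij}$, bounds $\deg W$ by Bezout as $\prod_{l=1}^{n-1} l^{\,n-l}$, and then uses $\deg U \leq \deg W$ since $U$ is the image of $W$ under projection. You instead work \emph{directly} inside $\mathcal{U}_n$: you cut out $U$ set-theoretically by the $\log$-pullbacks of exactly $\codim_{\mathcal{U}_n} U$ linear equations chosen depth by depth, and your depth-graded count gives the $\mathfrak{u}$-dependent bound $\prod_l l^{\,r_l}$ with $r_l \leq n - l$, of which the paper's $\prod_l l^{\,n-l}$ is the universal worst case. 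So your argument is slightly sharper and avoids the detour through the auxiliary variety $W$; the paper's argument avoids having to select a specific system of defining equations for $\mathfrak{u}$ and instead uses the fixed collection $Y_{ij} = \varphi(X)_{ij}$. (Your Kolchin reduction is of course equivalent to the paper's use of Engel's theorem on $\Lie U$.)

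One point you should make fully explicit is the properness of the iterated intersection. You correctly observe that since each hypersurface can drop the dimension by at most one, and since $\dim Z_N = \binom{n}{2} - N$ with $N = \sum_l r_l$ equations, one must have $\dim Z_i = \binom{n}{2} - i$ at every step (in any order). But to conclude $\deg Z_i \leq \deg Z_{i-1} \cdot \deg f_i$ from Lemma~\ref{lem:degree_intersection} applied with $M = 1$, you also need that $f_i$ vanishes on \emph{no} irreducible component of $Z_{i-1}$, not merely on no top-dimensional one. This follows once you note, by an induction you should state, that each $Z_{i-1}$ is equidimensional: $Z_0 = \mathcal{U}_n$ is irreducible, and if $Z_{i-1}$ is equidimensional of dimension $\binom{n}{2} - i + 1$ and $\dim Z_i = \binom{n}{2} - i$, then $f_i$ vanishes on no component of $Z_{i-1}$, so by Krull's principal ideal theorem $Z_i$ is again equidimensional of the expected dimension. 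With that observation added, your iterated Bezout step is airtight; as written, the phrase ``forced by the dimension count'' is slightly too compressed. (The ordering of the equations by depth is, incidentally, immaterial to the bound, since the product of degrees is symmetric.)
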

    
    \begin{proof}
      By Engel's theorem~\cite[Corollary~1, p. 125]{VinbergOnishik}, there exists a basis such that $\Lie U$ is contained in a subspace $\mathcal{T} \subset \Mat_n(C)$ of strictly upper triangular matrices.  From now on, we fix such a basis.  
      By~\cite[Theorem~7, p. 126]{VinbergOnishik}, $U=\varphi(\Lie U)$, where $\varphi$ is the exponential map.
      Since every matrix in $\mathcal{T}$ is nilpotent of index at most $n$, $\varphi$ is defined everywhere on $\mathcal{T}$ by the following formula
    \[
      \varphi(X) = I_n + X + \frac{X^2}{2!} + \ldots + \frac{X^{n - 1}}{(n - 1)!} \text{ for } X \in \mathcal{T}.
    \]
    Consider the affine
    variety
    \[
    W := \{ (X, Y) \in \mathcal{T}\times \GL_n(C) \;|\; Y = \varphi(X) \;\&\; X \in \Lie U \}.
    \]
    Since the projection of $W$ to
    $\GL_n(C)$ is equal to $\varphi(\Lie U) = U$, $\deg U \leqslant \deg W$ by~\cite[Lemma~2]{Heintz}.
    The condition $X \in \Lie U$ is defined by linear equations.
    A direct computation shows that 
    \[
    \deg (\varphi(X))_{i, j} \leqslant 
    \begin{cases}
      -\infty, \text{ if } i > j,\\
      j - i, \text{ otherwise},
    \end{cases}
    \]
    where 
    $(\varphi(X))_{i, j}$ 
    denotes the $(i, j)$-th entry of the matrix $\varphi(X)$ whose entries are polynomials in the entries of $X$.
    The condition $Y = \varphi(X)$ is defined by $\frac{n(n + 1)}{2}$ linear equations, $n - 2$ quadratic equations, $n - 3$ equations of degree $3$, $\ldots$, one equation of degree $n - 1$.
    Thus, Bezout's theorem~\cite[Theorem~7.7, Chapter~1]{Hartshorne} implies that $\deg W \leqslant 2^{n - 2} 3^{n - 3} \ldots (n - 2)^2(n - 1) = \prod\limits_{k = 1}^{n - 1} k!$.
    \end{proof}

\subsection{Degree bound for reductive groups}\label{sub:reductive}
All statements in this section will be about a reductive group $G \subset \GL_n(C)$ such that $G \subset \hyperlink{N}{N}(F)$, where $F \subset \GL_n(C)$ is some connected group.
In our proofs, $G$ and $F$ will be the reductive and unipotent parts of a Levi decomposition of an arbitrary linear algebraic group, respectively.

\begin{lemma}\label{lem:bound_components}
  Let $G \subset \GL_n(C)$ be a reductive algebraic group such that $G \subset \hyperlink{N}{N}(F)$ for some connected  algebraic group $F \subset \GL_n(C)$.
  Then there is a \hyperlink{def_envelope}{toric envelope} $H \subset \hyperlink{N}{N}(F)$ of $G$  such that
  \[
  [H : H^\circ] \leqslant \hyperlink{Jordan}{J}(n) \hyperlink{A}{A}(n - 1) n^{n - 1}.
  \]
\end{lemma}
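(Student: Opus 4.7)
My plan is to obtain the bound $[H : H^\circ] \leqslant J(n) A(n-1) n^{n-1}$ by splitting it into three factors, each arising from a distinct reduction: the $J(n)$ from Jordan's theorem, the $n^{n-1}$ from the Weyl group of $\SL_n$, and the $A(n-1)$ from the action of an abelian component group on the character lattice of a maximal torus.

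First, I enlarge $G$ to $G \cdot Z_n$ and invoke Corollary~\ref{cor:scalar} together with Lemma~\ref{lem:product} to reduce to the case $G \subset \SL_n(C)$, where a maximal torus of $G^\circ$ has rank at most $n-1$; since the extra scalar factor is connected, this reduction does not worsen the component count. Then I pick a finite subgroup $F_0 \subset G$ surjecting onto $G/G^\circ$ (which exists by standard structure results for linear algebraic groups in characteristic zero) and apply Jordan's theorem in the form~\eqref{eq:Jordan} to $F_0$, obtaining a normal abelian subgroup of index at most $J(n)$. Pulling back gives a normal subgroup $G_1 \triangleleft G$ with $G^\circ \subset G_1$, $[G : G_1] \leqslant J(n)$, and $G_1/G^\circ$ abelian. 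If I can produce a toric envelope $H_1 \subset N(F)$ of $G_1$ with $[H_1 : H_1^\circ] \leqslant n^{n-1} A(n-1)$, then $H := H_1 \cdot G$ will be a toric envelope of $G$ (via Lemma~\ref{lem:def_TE} and the normality of $G_1$) satisfying the claimed bound.

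To construct $H_1$, I fix a maximal torus $T_0$ of $G^\circ$. Conjugacy of maximal tori in the reductive group $G^\circ$ gives $G_1 = G^\circ \cdot N_{G_1}(T_0)$, so $G_1/G^\circ \cong N_{G_1}(T_0)/N_{G^\circ}(T_0)$. The conjugation action of $N_{G_1}(T_0)$ on $T_0$ embeds $N_{G_1}(T_0)/Z_{G_1}(T_0)$ as a finite subgroup of $\Aut(T_0) \cong \GL_r(\mathbb{Z})$ with $r = \dim T_0 \leqslant n-1$. This image is an extension of the image of $N_{G^\circ}(T_0)$—i.e. the Weyl group of $G^\circ$, which embeds into the Weyl group $S_n$ of $\SL_n$ of order $n! \leqslant n^{n-1}$—by an abelian quotient inherited from $G_1/G^\circ$, which sits as a finite abelian subgroup of $\GL_{n-1}(\mathbb{Z})$ of order at most $A(n-1)$. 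Taking $T$ to be a maximal torus of $\SL_n$ containing $T_0$ and setting $H_1 := T \cdot G_1$ gives the desired toric envelope.

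The main obstacle is arranging $H \subset N(F)$ while simultaneously making $T \cdot G_1$ an abstract subgroup of $\GL_n(C)$: the enlarging torus $T$ must normalize both $F$ and $G_1$. The key observation is that $T_0 \subset G^\circ \subset N(F)$, so the Cartan subgroup $Z_{\GL_n}(T_0)^\circ$ contains a maximal torus of $N(F)$ centralizing $T_0$; such a torus automatically normalizes $F$, and by the structure theory of connected reductive groups it also normalizes $G^\circ$ (and hence, modulo passing to a finite-index refinement already absorbed in the factor $A(n-1)$, it normalizes $G_1$), which is precisely what is needed to conclude.
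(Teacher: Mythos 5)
The approach starts along the same lines as the paper (reduce to $Z_n\subset G$, write $G=\Gamma G^\circ$, invoke Jordan's bound to control the index of an abelian piece), but the construction of the torus and the accounting of connected components diverge substantially, and there are genuine gaps.

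\textbf{Main gap: the torus need not normalize $G^\circ$.} Your ``key observation'' is that a torus $T$ centralizing a maximal torus $T_0$ of $G^\circ$ (and normalizing $F$) ``by the structure theory of connected reductive groups also normalizes $G^\circ$.'' This is false. Take $G^\circ$ to be the image of the adjoint representation of $\SL_2$ in $\GL_3(C)$ (so $G^\circ\cong\mathrm{PSL}_2$ acting irreducibly on $C^3$), with maximal torus $T_0=\{\diag(s,1,s^{-1})\}$ in a suitable basis. Then $Z_{\GL_3}(T_0)^\circ$ is the full diagonal torus $D=\{\diag(x,y,z)\}$, but since $N_{\GL_3}(G^\circ)^\circ=G^\circ\cdot Z_3$ has dimension $4$, only the two-dimensional subtorus $\{xz=y^2\}$ of $D$ normalizes $G^\circ$. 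So centralizing $T_0$ does not give you $N(G^\circ)$, and your $H_1=T\cdot G_1$ is not a group. The paper's proof avoids this by working with the connected \emph{center} $T:=C(G^\circ)^\circ$ rather than a maximal torus, and by explicitly intersecting the candidate torus with $N(S)$, where $S=[G^\circ,G^\circ]$: the resulting $T_0:=(H_0\cap N(S)\cap N(F))^\circ$ normalizes $S$, $T$, and $F$ by construction, which is what actually makes $H$ a group.

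\textbf{Secondary issues.} (i) Your factor $n^{n-1}$ is attributed to $|W(G^\circ)|\leqslant n!$. Even granting that inequality (which does hold, via the action of $N_{\GL_n}(T_0)$ on the at most $n$ weight spaces of $T_0$, but needs an argument), it is not clear how a bound on $|N_{G_1}(T_0)/Z_{G_1}(T_0)|$ translates into a bound on $[H_1:H_1^\circ]$: the component count of $T\cdot G_1$ is $[G_1:G_1\cap TG^\circ]$, and elements of $Z_{G_1}(T_0)$ are not automatically absorbed into $TG^\circ$ for your choice of $T$. The paper's $n^{n-1}$ has a completely different origin --- it is a degree bound (Corollary~\ref{cor:intersect_with_normalizer}, going back to the fact that normalizers are cut out by polynomials of degree $\leqslant n$) applied to the intersection $H_0\cap N(S)\cap N(F)$; this simultaneously controls the extra components introduced by intersecting with normalizers. (ii) You assert that the abelian \emph{quotient} of $N_{G_1}(T_0)/Z_{G_1}(T_0)$ ``sits as a finite abelian subgroup of $\GL_{n-1}(\mathbb{Z})$,'' but a quotient is not a subgroup; the paper instead applies the $A(n-1)$ bound directly to the image $\varphi(\Gamma_{\mathrm{ab}})\subset\GL_d(\mathbb Z)$ of the abelian group $\Gamma_{\mathrm{ab}}$ acting on $T\cap\SL_n(C)$, which is an honest subgroup. (iii) To absorb the kernel of the conjugation action into the identity component, the paper takes $H_0$ to be the intersection of all maximal tori containing the \emph{quasitorus} $\Gamma_0\cdot T$ where $\Gamma_0=\ker\varphi$; simply choosing a maximal torus containing $T_0$, as you do, does not ensure that $\Gamma_0$ (or its analogue) lands in $H_1^\circ$.
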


\begin{proof}
  Using Corollary~\ref{cor:scalar}, we replace $G$ with $G Z_n$, so in what follows, we assume that $Z_n \subset G$.

  By \cite[Lemma~10.10]{Wehrfritz}, $G$ can be written as a product $\Gamma G^\circ$ for some finite group $\Gamma \subset \GL_n(C)$.  
  \cite[Proposition, p. 181]{Borel} implies that $G^\circ$ can be written as $S T$, where $T := C(G^\circ)^\circ$ is a torus and $S := [G^\circ, G^\circ]$ is semisimple.
  Since centers and connected components of the identity 
  are preserved by any automorphism of a group, and conjugation by an element of $\Gamma$ induces an automorphism of $G^{\circ}$,
  $\Gamma$ normalizes $T$.
  
  By the definition of $J(n)$ (see Notation~\ref{not:main}), there exists a normal abelian subgroup $\Gamma_{\mathrm{ab}} \subset \Gamma$ of index at most $J(n)$.
  Since $Z_n \subset G$, $T$ contains $Z_n$.
  Then Lemma~\ref{lem:product} implies that
  \[
  T = Z_n \cdot (T \cap \SL_n(C)).
  \]
  For every algebraic group $F$, by $\operatorname{AlgAut}(F)$ we will denote the group of algebraic automorphisms of $F$.
  The action of $\Gamma_{ab}$ on $T$ by conjugation defines a group homomorphism $\varphi\colon \Gamma_{ab} \to \operatorname{AlgAut} (T \cap \SL_n(C))$.
  Since $T \cap \SL_n(C) \cong (C^\ast)^d$  for some $d \leqslant n - 1$ (see~\cite[Problem~10, p. 114]{VinbergOnishik}), 
  \[
  \operatorname{AlgAut} (T \cap \SL_n(C)) \cong \GL_d(\mathbb{Z}).
  \]
  Let $\Gamma_0 := \Ker \varphi$.
  Since $\Gamma_0 = \Gamma_{\mathrm{ab}} \cap Z(T)$ and both $\Gamma_{\mathrm{ab}}$ and $T$ are normalized by $\Gamma$, $\Gamma_0$ is a normal subgroup in $\Gamma$.
  
  We set $H_0$ to be the intersection of all the maximal tori in $\GL_n(C)$ containing $\Gamma_0 \cdot T$.
  Since $\Gamma_0 \cdot T$ is a quasitorus, it is diagonalizable (see~\cite[Theorem~3, p. 113]{VinbergOnishik}), so there is at least one maximal torus containing $\Gamma_0 \cdot T$.
  Thus, $H_0$ is a torus.
  Since $\Gamma_0 \cdot T$ is normalized by $\Gamma$, $H_0$ is also normalized by $\Gamma$.
  We set $H_1 = H_0 \cap N(S) \cap N(F)$ and
  \begin{equation}\label{eq:Hdef_red}
    H := T_0 \cdot G, \text{ where } T_0 := H_1^\circ.
  \end{equation}
  
  The lemma follows from the following two claims.
  
  \paragraph{Claim~1: $H$ is a group.}
  Since $T \subset T_0$ and $\Gamma$ normalizes $T_0$, we have
  \begin{equation}\label{eq:H_group}
    H = T_0 \cdot G = T_0 \cdot \Gamma \cdot T \cdot S = \Gamma \cdot (T_0 \cdot S).
  \end{equation}
  The latter is a group, because $T_0$ normalizes $S$ and $\Gamma$ normalizes $T_0$ and $S$.
    
  \paragraph{Claim~2: $[H : H^\circ] \leqslant J(n) A(n - 1) n^{n - 1}$.}
  From~\eqref{eq:H_group} we have $H = (\Gamma \cdot T_0) \cdot (T_0\cdot S)$.
  Since $T_0\cdot S$ is connected, $H$ has at most as many connected components as $\Gamma \cdot T_0$.
  Since $T_0 = H_1^\circ$, the latter is bounded by the number of connected components of $\Gamma\cdot H_1$.
  We have
  \begin{equation}\label{eq:gamma_h1}
  \deg (\Gamma \cdot H_1) \leqslant [\Gamma : \Gamma_0] \cdot \deg H_1 = [\Gamma : \Gamma_{\mathrm{ab}}] \cdot [\Gamma_\mathrm{ab} : \Gamma_{0}] \cdot \deg H_1.
  \end{equation}
  We have already shown that $[\Gamma : \Gamma_{\mathrm{ab}}] \leqslant J(n)$.
  The index $[\Gamma_\mathrm{ab} : \Gamma_{0}] = |\varphi(\Gamma_{\mathrm{ab}})|$ does not exceed the maximal size of a finite abelian subgroup of $\GL_d(\mathbb{Z})$.
  Since $d \leqslant n - 1$, this number is at most $A(n - 1)$.

  Since $H_0$ is defined by linear polynomials, $\deg H_0 = 1$.
  Since $H_0$ is a torus, $\dim H_0 \leqslant n$.
  Since $\dim \left(H_0 \cap N(S) \cap N(F) \right) \geqslant \dim Z_n = 1$, Corollary~\ref{cor:intersect_with_normalizer} implies that
  \begin{equation}\label{eq:bound_components}
  \deg H_1 = \deg \left(H_0 \cap N(S) \cap N(F)\right) \leqslant n^{n - 1}.
  \end{equation}
  Thus, $H$ has at most $[\Gamma \colon \Gamma_0] \cdot \deg H_1 \leqslant J(n) A(n - 1) n^{n - 1}$ connected components.\qedhere
  
\end{proof}

\begin{corollary}\label{cor:reductive_torus}
  In the notation of Lemma~\ref{lem:bound_components},
  if $G^\circ$ is a torus, then 
    \[
    \deg H \leqslant \hyperlink{Jordan}{J}(n) \hyperlink{A}{A}(n - 1) n^{n - 1}.
    \]
\end{corollary}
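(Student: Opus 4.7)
The plan is to retrace the construction of $H$ from the proof of Lemma~\ref{lem:bound_components} under the extra assumption that $G^\circ$ is a torus, and observe that the intermediate degree estimates carried out there already bound $\deg H$ itself, not merely the number of connected components.

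First, I would exploit the hypothesis to collapse the structure of $G^\circ$. Since $G^\circ$ is abelian, the semisimple part $S = [G^\circ, G^\circ]$ is trivial, and the torus $T := C(G^\circ)^\circ$ coincides with $G^\circ$. Moreover, $T$ lies in $H_0 \cap N(F) = H_1$ (because $S = \{e\}$ makes $N(S) = \GL_n(C)$), and, being connected, $T$ lies in $H_1^\circ = T_0$. The definition~\eqref{eq:Hdef_red} then simplifies: $H = T_0\cdot G = T_0\cdot \Gamma\cdot T = \Gamma\cdot T_0$.

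Next, I would compare $H = \Gamma \cdot T_0$ with the larger set $\Gamma \cdot H_1$ considered in the original proof. Because $T_0 = H_1^\circ$ is one of the connected components of $H_1$, every coset $\gamma T_0$ (for $\gamma\in\Gamma$) is an irreducible closed subset of $\gamma H_1 \subset \Gamma H_1$ of dimension $\dim H_1$, hence it is an irreducible component of $\Gamma H_1$. Distinct cosets of $T_0$ are disjoint, so they contribute distinct components. Consequently, $H$ is a union of irreducible components of $\Gamma H_1$, and additivity of degree over components of the same dimension yields
\[
\deg H \;\leqslant\; \deg(\Gamma \cdot H_1).
\]

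Finally, I would invoke the chain of estimates already carried out inside the proof of Lemma~\ref{lem:bound_components}: inequality~\eqref{eq:gamma_h1} together with $[\Gamma:\Gamma_{\mathrm{ab}}] \leqslant J(n)$, $[\Gamma_{\mathrm{ab}}:\Gamma_0] \leqslant A(n-1)$, and the bound $\deg H_1 \leqslant n^{n-1}$ from~\eqref{eq:bound_components} give $\deg(\Gamma\cdot H_1) \leqslant J(n)\,A(n-1)\,n^{n-1}$. Combined with the containment from the previous paragraph, this delivers the stated bound. The one point that deserves scrutiny is the geometric claim that each $\gamma T_0$ really is a full component of $\Gamma H_1$ (rather than being absorbed into a larger component); this is the main conceptual step, but it is forced by $T_0 = H_1^\circ$ together with the disjointness of distinct cosets of $T_0$.
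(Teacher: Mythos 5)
Your proposal is correct and takes essentially the same route as the paper: observe $S$ is trivial, reduce $H$ to $\Gamma\cdot T_0$, bound $\deg H$ by $\deg(\Gamma\cdot H_1)$, and reuse~\eqref{eq:gamma_h1} and~\eqref{eq:bound_components}. The paper states $\deg H\leqslant\deg(\Gamma\cdot H_1)$ without elaboration; your component-counting justification (each coset $\gamma T_0$ is a full component of the equidimensional variety $\Gamma H_1$ because $T_0=H_1^\circ$) correctly fills in that step.
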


\begin{proof}
  In this case, $S$ from the proof of Lemma~\ref{lem:bound_components} is trivial.
  Since $T \subset T_0$, $H = \Gamma \cdot T_0$.
  Then $\deg H \leqslant (\deg \Gamma\cdot H_1)$.
  The latter is bounded by $J(n)A(n - 1)n^{n - 1}$ due to~\eqref{eq:gamma_h1} and~\eqref{eq:bound_components}.
\end{proof}

\begin{lemma}\label{lem:connected_reductive}
  Let $G \subset \GL_n(C)$ be a connected reductive group such that $G \subset \hyperlink{N}{N}(F)$ for some connected group $F \subset \GL_n(C)$.
  Then there exists a \hyperlink{def_envelope}{toric envelope} $H \subset \hyperlink{N}{N}(F)$ of $G$ such that
  \[
    \deg H \leqslant n^{n^2 - \dim G} \quad\text{ and }\quad \hyperlink{N}{N}(G) \cap \hyperlink{N}{N}(F) \subset \hyperlink{N}{N}(H).
  \]
\end{lemma}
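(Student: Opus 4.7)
My plan is to construct $H$ explicitly as the identity component of an intersection of three canonically defined subgroups, and then read off the degree bound by applying Lemma~\ref{lem:degree_intersection}. Specifically, I would set
\[
H := \bigl(N(G) \cap Z(Z(G)) \cap N(F)\bigr)^\circ,
\]
where $Z(Z(G)) \subset \GL_n(C)$ is the double centralizer of $G$. The inclusion $H \subseteq N(F)$ is immediate from the definition, and any $\gamma \in N(G) \cap N(F)$ normalizes each of $G$, its centralizer $Z(G)$, its double centralizer $Z(Z(G))$, and $N(F)$; hence it normalizes the whole intersection and its identity component $H$, which proves $N(G) \cap N(F) \subseteq N(H)$.

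To verify that $H$ is a toric envelope of $G$, I would pass to Lie algebras. Decompose $C^n = \bigoplus_i V_i \otimes W_i$ as a $G$-module with the $V_i$ pairwise non-isomorphic irreducible representations. Then $\mathfrak{z}(\mathfrak{g}) = \prod_i \mathfrak{gl}(W_i)$, and its center $\mathfrak{t}_0 = \prod_i C \cdot I_{V_i \otimes W_i}$ is toral. Since $\mathfrak{g}$ is reductive, $\mathfrak{n}(\mathfrak{g}) = \mathfrak{g} + \mathfrak{z}(\mathfrak{g})$; combined with $\mathfrak{g} \subseteq \mathfrak{z}(\mathfrak{z}(\mathfrak{g})) \cap \mathfrak{n}(\mathfrak{f})$ and the modular law, this yields
\[
\Lie H = \mathfrak{g} + \bigl(\mathfrak{t}_0 \cap \mathfrak{n}(\mathfrak{f})\bigr).
\]
Hence $H = G \cdot T_1$ for a torus $T_1 \subseteq N(F)$ that centralizes $G$; in particular $[H,H] = [G,G]$, so $H$ is connected reductive with the same unipotent elements as $G$, and is a toric envelope by Lemma~\ref{lem:def_TE}.

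The degree bound is where this particular choice of $H$ pays off. The subgroups $N(G)$ and $N(F)$ are cut out by polynomials of degree at most $n$ by \cite[Lemma~B.4]{Feng}, while $Z(Z(G))$, being a double centralizer, is defined by linear equations in the matrix entries. Thus $H$ is an irreducible component of $\GL_n(C) \cap V(f_1, \ldots, f_M)$ with every $\deg f_i \leqslant n$. Applying Lemma~\ref{lem:degree_intersection} with $X = \GL_n(C)$ (so $D = 1$, $d = n^2$), $D_1 = n$, and $d' = \dim H$ bounds the total degree of the components of dimension at least $\dim H$ by $n^{n^2 - \dim H}$. Since the intersection is an algebraic subgroup, all its components share this common dimension, so $H$ is one such component and $\deg H \leqslant n^{n^2 - \dim H} \leqslant n^{n^2 - \dim G}$.

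The main obstacle I anticipate is identifying the correct double-centralizer cut. Without it, $(N(G) \cap N(F))^\circ$ has Lie algebra $(\mathfrak{g} + \mathfrak{z}(\mathfrak{g})) \cap \mathfrak{n}(\mathfrak{f})$, which typically contains nilpotent elements of $\mathfrak{z}(\mathfrak{g})$ outside $\mathfrak{g}$; these produce unipotent elements outside $G$ and spoil the toric envelope property. Intersecting further with $Z(Z(G))$ trims $\mathfrak{z}(\mathfrak{g})$ down to its toral center $\mathfrak{t}_0$, and this is exactly what realizes $H$ as the product of $G$ with a central torus.
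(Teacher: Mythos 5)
Your proposal is essentially the paper's own proof, arriving at the same toric envelope from a slightly different angle. The paper sets $H := \bigl(Z(G)\cap Z(Z(G))\cap N(F)\bigr)^\circ\cdot G$ and, for the degree bound, shows $H = \widehat{H}^\circ$ with $\widehat{H} := N(S)\cap Z(Z(G))\cap N(F)$ ($S := [G,G]$), using Whitehead's lemma to prove $\mathfrak{n}(\mathfrak{s}) = \mathfrak{s}\oplus\mathfrak{z}(\mathfrak{s})$; your $H = \bigl(N(G)\cap Z(Z(G))\cap N(F)\bigr)^\circ$ is the same connected group. The one place you diverge is the Lie-algebra identity $\mathfrak{n}(\mathfrak{g}) = \mathfrak{g}+\mathfrak{z}(\mathfrak{g})$ for reductive $\mathfrak{g}$, which you use directly on $G$ rather than on the semisimple part; this is correct (complete reducibility of the adjoint action of $G$ on $\mathfrak{gl}_n$ gives a $G$-invariant complement $\mathfrak{m}$ to $\mathfrak{g}$, and any $m\in\mathfrak{n}(\mathfrak{g})\cap\mathfrak{m}$ satisfies $[g,m]\in\mathfrak{g}\cap\mathfrak{m}=0$), and it is arguably a cleaner route than the paper's Whitehead-lemma computation, though you should spell out this justification since it is the hinge of the argument. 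Everything else — the normalization statement $N(G)\cap N(F)\subset N(H)$, the computation of the toral center $\mathfrak{t}_0$ via Schur's lemma, and the degree bound via Lemma~\ref{lem:degree_intersection} applied to the degree-$\leqslant n$ equations cutting out normalizers (together with the linear equations for $Z(Z(G))$) — matches the paper's argument.
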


\begin{proof}
  Using Corollary~\ref{cor:scalar} we may replace $G$ with $G Z_n$, so we will assume that $Z_n \subset G$.
   We set
   \[
   H := \left( Z(G) \cap Z(Z(G)) \cap N(F)\right)^\circ \cdot G.
   \]
   The lemma follows from the following three claims
   
   \paragraph{Claim~1: $H$ is a  toric envelope of $G$.}
   Since $Z(G)$ normalizes $G$, $H$ is a group. 
   We will show that the connected component of identity of $Z(G) \cap Z(Z(G))$ is a torus. 
   Then the connected component of the identity of $Z(G) \cap Z(Z(G)) \cap N(F)$ will also be a torus.
   
   Since $G$ is reductive, its representation in $C^n$ is completely reducible (see~\cite[Theorem~4.3, p. 117]{Hochschild}).
   Let $C^n = V_1 \oplus V_2 \oplus \ldots \oplus V_\ell$ be a decomposition of $C^n$ into isotypic components.
   Each $V_i$ can be written as $W_i \otimes C^{n_i}$, where $W_i$ is the corresponding irreducible representation of $G$ and $C^{n_i}$ is a trivial representation.
   Let $d_i := \dim W_i$ for $1 \leqslant i \leqslant \ell$.
   Then Schur's lemma implies that
   \[
   Z(G) = \bigoplus_{i = 1}^\ell \left( C^{\ast} I_{d_i} \otimes \GL_{n_i}(C) \right), \text{ where } I_{d_i} \text{ is a } d_i \times d_i \text{ identity matrix.}
   \]
   Since $Z(G) \cap Z(Z(G))$ is the center of $Z(G)$, we have
   
   \[
   Z(G) \cap Z(Z(G)) = \bigoplus_{i = 1}^\ell \left( C^{\ast} I_{d_i} \otimes C^\ast I_{n_i} \right) = \bigoplus_{i = 1}^\ell C^{\ast} I_{n_id_i}.
   \]
   Thus, $Z(G) \cap Z(Z(G))$ is a torus.  So the claim is proved.
   
   \paragraph{Claim~2: $\deg H \leqslant n^{n^2 - \dim G}$.}
   \cite[Proposition, p. 181]{Borel} implies that $G$ can be written as $S T$, where $T := C(G)^\circ$ is a torus and $S := [G, G]$ is semisimple.
   Consider $\widehat{H} := N(S) \cap Z(Z(G)) \cap N(F)$.
   Then $H \subset \widehat{H}$.
   We will show that $H = \widehat{H}^\circ$.

  Let $\mathfrak{g} := \Lie G$ and $\mathfrak{s} := \Lie S$.
  Consider an element $a \in \mathfrak{n}(\mathfrak{s})$.
  The map $\mathfrak{s} \to \mathfrak{s}$ defined by $g \mapsto [a, g]$ satisfies the requirements of Whitehead's lemma~\cite[Lemma~3, p. 77]{Jacobson}.
  
  Hence there exists $h \in \mathfrak{s}$ such that $[h, g] = [a, g]$ for every $g \in \mathfrak{s}$, so $a$ can be written as $a = h + (a - h)$, where $a - h \in \mathfrak{z}(\mathfrak{s})$.
  Since $\mathfrak{s}$ is semisimple, $\mathfrak{s} \cap \mathfrak{z}(\mathfrak{g}) = 0$, so 
  \begin{equation}\label{eq:normalizer}
  \mathfrak{n}(\mathfrak{s}) = \mathfrak{s} \oplus \mathfrak{z}(\mathfrak{s}).
  \end{equation}
   Decomposition~\eqref{eq:normalizer} implies that
   \begin{equation}\label{eq:whitehead_cor}
   N(S)^\circ = S \cdot Z(S)^\circ.
   \end{equation}
   We can write $\widehat{H}^\circ$ as
   \[
   \widehat{H}^\circ = \left( N(S) \cap Z(Z(G)) \cap N(F) \right)^\circ.
   \]
   Using~\eqref{eq:whitehead_cor}, we obtain
   \[
   \widehat{H}^\circ = \left( (S \cdot Z(S)^\circ) \cap Z(Z(G)) \cap N(F) \right)^\circ.
   \]
   Using consequently the inclusions $S \subset Z(Z(G)) \cap N(F)$ and $Z(Z(G)) \subset Z(T)$, we can further write
   \[
   \widehat{H}^\circ = \left( Z(S)^\circ \cap Z(Z(G)) \cap N(F) \right)^\circ \cdot S = \left( (Z(S) \cap Z(T)) \cap Z(Z(G)) \cap N(F) \right)^\circ \cdot S
   \]
   Since $Z(G) = Z(S) \cap Z(T)$, the latter is equal to $\left( Z(G) \cap Z(Z(G)) \cap N(F) \right)^\circ \cdot S$.
   Using $T \subset Z(G) \cap Z(Z(G)) \cap N(F)$, we conlcude that
   \[
     \widehat{H}^\circ = \left( Z(G) \cap Z(Z(G)) \cap N(F) \right)^\circ \cdot S = \left( Z(G) \cap Z(Z(G)) \cap N(F) \right)^\circ \cdot G = H.
   \]
   Thus, $\deg H \leqslant \deg \widehat{H}$.
   Since any centralizer is defined by linear equations, $\deg Z(Z(G)) = 1$.
  By Corollary~\ref{cor:intersect_with_normalizer} 
  \[
   \deg \widehat{H} \leqslant n^{\dim Z(Z(G)) - \dim \widehat{H}} \deg Z(Z(G)) \leqslant n^{n^2 - \dim G}.
   \]
   
   \paragraph{Claim~3. $N(G) \cap N(F) \subset N(H)$.}
   Consider $A \in N(G) \cap N(F)$.
   Since $A$ normalizes $G$, it normalizes $Z(G)$.
   Likewise, $A$ normalizes $Z(Z(G))$.
   Since $A$ also normalizes $N(F)$, we have $A \in N(H)$.
\end{proof}


\begin{lemma}\label{lem:reductive_bound}
 Let $G \subset \GL_n(C)$ be a reductive subgroup such that $G \subset \hyperlink{N}{N}(F)$ for some connected group $F \subset \GL_n(C)$.
 Then there exists a \hyperlink{def_envelope}{toric envelope} $H \subset \hyperlink{N}{N}(F)$ of $G$ such that
 \[
   \deg H \leqslant \hyperlink{Jordan}{J}(n) \hyperlink{A}{A}(n - 1) n^{n^2 + n - 5}.
 \]
\end{lemma}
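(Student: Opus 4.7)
The plan is to iterate Lemmas~\ref{lem:bound_components} and~\ref{lem:connected_reductive} and glue their outputs together via Corollary~\ref{cor:transitive}. By Corollary~\ref{cor:scalar}, I may replace $G$ by $G \cdot Z_n$ and assume $Z_n \subset G$, and then split on whether $G^\circ$ is a torus.

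If $G^\circ$ is a torus, Corollary~\ref{cor:reductive_torus} directly supplies a toric envelope $H \subset N(F)$ of $G$ with $\deg H \leqslant J(n) A(n-1) n^{n-1}$, which is at most $J(n) A(n-1) n^{n^2+n-5}$ for $n \geqslant 2$. Otherwise, the derived subgroup $S := [G^\circ, G^\circ]$ is a nontrivial connected semisimple group, hence $\dim S \geqslant 3$ (the smallest connected semisimple group is three-dimensional).

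In the latter case I proceed in three steps. First, apply Lemma~\ref{lem:bound_components} to $G$ to obtain a toric envelope $H_1 \subset N(F)$ of $G$ with $[H_1 : H_1^\circ] \leqslant J(n) A(n-1) n^{n-1}$; by Corollary~\ref{cor:TE_reductive}, $H_1^\circ$ is connected reductive. Second, apply Lemma~\ref{lem:connected_reductive} to $H_1^\circ \subset N(F)$ to produce a toric envelope $H_2 \subset N(F)$ of $H_1^\circ$ with $\deg H_2 \leqslant n^{n^2 - \dim H_1^\circ}$ and the extra property $N(H_1^\circ) \cap N(F) \subset N(H_2)$. Third, set $H := H_1 \cdot H_2$: the inclusion $H_1 \subset N(H_1^\circ) \cap N(F) \subset N(H_2)$ makes $H$ a group, and writing $H_2 = T \cdot H_1^\circ$ for a torus $T$ gives $H = T \cdot H_1$, so $H$ is a toric envelope of $H_1$ and, by Corollary~\ref{cor:transitive}, of $G$.

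For the degree bound, $H^\circ = H_2$ (since $H_2$ is connected and contains $H_1^\circ$), so $[H : H^\circ] \leqslant [H_1 : H_1^\circ]$ and
\[
\deg H \leqslant [H : H^\circ] \cdot \deg H^\circ \leqslant J(n) A(n-1) n^{n-1} \cdot n^{n^2 - \dim H_1^\circ}.
\]
The final input is the lower bound $\dim H_1^\circ \geqslant 4$: since $Z_n \subset H_1^\circ$ and $S \subset H_1^\circ$ with $Z_n \cap S$ finite (the center of a semisimple group is finite), $\dim H_1^\circ \geqslant 1 + \dim S \geqslant 4$, yielding the stated bound. The main subtlety is this dimension estimate, which forces the case split and is responsible for the exponent $n^2+n-5$ rather than the naive $n^2+n-2$ one would get without it.
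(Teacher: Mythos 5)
Your proposal is correct and follows essentially the same route as the paper: reduce to $Z_n \subset G$, split on whether $G^\circ$ is a torus, apply Lemma~\ref{lem:bound_components} to control the component group, then Lemma~\ref{lem:connected_reductive} to the identity component, and multiply the degree bounds. The only cosmetic difference is that you keep $H_1$ explicit and invoke the transitivity Corollary~\ref{cor:transitive} to conclude, whereas the paper replaces $G$ by the output of Lemma~\ref{lem:bound_components} in place and then verifies the criterion of Lemma~\ref{lem:def_TE} directly for $H := \Gamma H_0$; since $H_1 = \Gamma H_1^\circ$ with $H_1^\circ \subset H_2$, your $H = H_1 \cdot H_2$ equals $\Gamma H_2$, i.e.\ the same group as in the paper.
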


\begin{proof}
  Using Corollary~\ref{cor:scalar}, we may replace $G$ with $G Z_n$,  so we will assume that $Z_n \subset G$.
  In the case that $G^\circ$ is a torus, the lemma follows from Corollary~\ref{cor:reductive_torus}.
  Otherwise, $\dim G \geqslant \dim Z_n + \dim \SL_2(C) = 4$.

  Since being a toric envelope is a transitive relation (see Corollary~\ref{cor:transitive}), 
  applying Lemma~\ref{lem:bound_components}, 
  we will further assume that $[G : G^\circ] \leqslant J(n) A(n - 1) n^{n - 1}$.
  \cite[Lemma~10.10]{Wehrfritz} implies that $G = \Gamma G^{\circ}$ for some finite group $\Gamma$.
  Lemma~\ref{lem:connected_reductive} implies that there exists a toric envelope $H_0 \subset N(F)$ of $G^\circ$ such that $N(G^\circ) \cap N(F) \subset N(H_0)$ and $\deg H_0 \leqslant n^{n^2 - 4}$.
  Let $H := \Gamma H_0$. 
  Since $\Gamma \subset N(G^\circ) \cap N(F) \subset N(H_0)$, $H$ is an algebraic group.
  Since $G^\circ \subset H_0$, $G \subset H$.
  Since $H^\circ = H_0$, all unipotent elements of $H^\circ$ belong to $G^\circ$.
  Since also $H^\circ G \supset H^\circ \Gamma = H$, Lemma~\ref{lem:def_TE} implies that $H$ is a toric envelope of $G$.
  
  Since $H = H_0 G$ where $H_0 = H^\circ$, $[H : H^\circ] \leqslant [G : G^\circ]$.
  Then
  \[
  \deg H = [H : H^\circ] \cdot \deg H^\circ \leqslant J(n) A(n - 1) n^{n - 1} \cdot n^{n^2 - 4} = J(n) A(n - 1) n^{n^2 + n - 5}. \qedhere
  \]
\end{proof}

\subsection{Degree bound for product}\label{sub:levi_bound}

\begin{lemma}\label{lem:levi_bound}
  Let $G = G_0 \cdot U \subset \GL_n(C)$, where $U$ is a connected unipotent group, $G_0$ is a reductive group, and $G \subset \hyperlink{N}{N}(U)$.
  Let $\deg G_0 = D_1$, $\deg U = D_2$.
  Then $\deg G \leqslant D_1D_2 2^{n(n - 1) / 2}$.
\end{lemma}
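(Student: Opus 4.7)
The idea is to realize $G$ as the image of the multiplication morphism $\mu : G_0 \times U \to G$ and bound $\deg G$ via the graph of $\mu$ together with Lemma~\ref{lem:degree_intersection}. First, by Engel's theorem (as in the proof of Lemma~\ref{lem:unipotent_bound}) we change basis so that $U\subset\mathbb U_n$, the group of upper unitriangular matrices. Then every $u\in U$ has the form $u=I+N$ with $N$ in the $n(n-1)/2$-dimensional space $\mathcal N$ of strictly upper triangular matrices; setting $V:=\{N\in\mathcal N : I+N\in U\}$ gives a subvariety with $\deg V\le D_2$. Because $G_0\cap U$ is a normal unipotent subgroup of the reductive group $G_0$, it is trivial, so $\mu(g,N):=g+gN$ defines a bijective morphism $G_0\times V\to G$.

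Next, form the graph
\[
\Gamma \;=\; \bigl\{(g,N,x)\in G_0\times V\times \Mat_n(C)\,:\, x=g+gN\bigr\}
\]
inside the product variety $G_0\times V\times \Mat_n(C)$, which has degree $D_1D_2$ and dimension $\dim G_0+\dim V+n^2$. The projection $(g,N,x)\mapsto x$ is a linear map, injective on $\Gamma$ with image $G$, so by the Heintz-type degree bound for linear projections we get $\deg G\le\deg\Gamma$. The subvariety $\Gamma$ is cut out by the $n^2$ equations
\[
x_{ij}-g_{ij}-\sum_{k<j}g_{ik}N_{kj}=0,\qquad 1\le i,j\le n.
\]
Of these, the $n$ equations with $j=1$ are linear ($x_{i1}=g_{i1}$), whereas the $n(n-1)$ equations with $j\ge 2$ are bilinear in $(g,N)$.

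The principal obstacle is converting the naive Bezout bound $2^{n(n-1)}$ (which treats each bilinear equation as a quadratic intersection) into the target $2^{n(n-1)/2}$. The plan is to exploit the column-by-column structure of the equations. After using the linear column-$1$ equations to pin down $g_{\cdot 1}=x_{\cdot 1}$ at no Bezout cost, for each $j\ge 2$ the $n$ bilinear equations in column $j$ decompose, once $g_{\cdot 1},\ldots,g_{\cdot j-1}$ are in hand, into $j-1$ equations that effectively determine the new variables $N_{1j},\ldots,N_{j-1,j}$ and $n-j+1$ genuine constraints expressing that $x_{\cdot j}-g_{\cdot j}$ lies in the column span of $g_{\cdot 1},\ldots,g_{\cdot j-1}$. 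Summing the second group over $j=2,\ldots,n$ gives
\[
\sum_{j=2}^{n}(n-j+1)\;=\;\frac{n(n-1)}{2}
\]
essential quadratic constraints. Applying Lemma~\ref{lem:degree_intersection} in stages (linear equations first, preserving the degree $D_1D_2$, then the $n(n-1)/2$ essential quadratic constraints, each potentially doubling the degree) yields $\deg\Gamma\le D_1D_2\cdot 2^{n(n-1)/2}$, and hence $\deg G\le D_1D_2\cdot 2^{n(n-1)/2}$. The hard part is converting the informal ``elimination vs.\ essential constraint'' column-by-column split into a clean chain of applications of Lemma~\ref{lem:degree_intersection}, which will likely be done by iteratively projecting out the already-determined variables one column at a time so that at each stage the remaining constraints are of degree at most~$2$ in a progressively smaller ambient.
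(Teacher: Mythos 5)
Your graph construction $\Gamma=\{(g,N,x):x=g+gN\}$ is close in spirit to the paper's auxiliary variety $P$, but your choice of basis is the fatal defect. You only triangularize $U$; you leave $G_0$ arbitrary. In that basis, \emph{every} entry of the matrix equation $x=g(I+N)$ in columns $j\ge 2$ is genuinely bilinear in $(g,N)$, giving $n(n-1)$ quadratic equations. Since the graph $\Gamma$ has codimension exactly $n^2$ in $G_0\times V\times\Mat_n(C)$, of which only $n$ drops come from the linear column-$1$ equations, Lemma~\ref{lem:degree_intersection} applied with $D_1=2$ can only deliver $\deg\Gamma\le D_1D_2\cdot 2^{n(n-1)}$. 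Your ``column-by-column elimination'' does not close the gap: once you project out the determined variables $N_{1j},\dots,N_{j-1,j}$, the condition that $x_{\cdot j}-g_{\cdot j}$ lie in the span of $g_{\cdot 1},\dots,g_{\cdot j-1}$ is the vanishing of the $j\times j$ minors of the $n\times j$ matrix $[\,g_{\cdot 1}\,\cdots\,g_{\cdot j-1}\,|\,x_{\cdot j}-g_{\cdot j}\,]$, which are degree $j$, not degree $2$ (already for $j=3$, after clearing the $2\times 2$ denominator the constraint on $(g,x)$ is cubic). And if you instead keep $N$ as coordinates in the graph, the ``eliminating'' equations are still quadratic, still contribute to the codimension, and therefore still contribute a factor of $2$ apiece in Bezout. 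Neither route gives $2^{n(n-1)/2}$.

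The idea you are missing is the paper's choice of a basis adapted simultaneously to $G_0$ and $U$, which uses both hypotheses you have not exploited. Set $V_i:=\{v:\ (A-I)^iv=0\ \forall A\in U\}$; since $G_0\subset N(U)$ this filtration is $G_0$-stable, and since $G_0$ is reductive one can split $V_i=V_{i-1}\oplus W_i$ as $G_0$-modules. In a basis compatible with $C^n=W_1\oplus\cdots\oplus W_s$, elements of $G_0$ are block-diagonal and elements of $U$ are block upper-unitriangular. Then in the $n^2$ equations $XY=Z$, the diagonal and strictly-lower blocks are linear and only the strictly-upper blocks $Z_{ij}=X_iY_{ij}$ (for $i<j$) are bilinear; there are $\sum_{i<j}n_in_j\le n(n-1)/2$ of these, and one application of Lemma~\ref{lem:degree_intersection} finishes the argument. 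Without the block-diagonal form of $G_0$ the intended bound is out of reach by Bezout-type counting, so you should begin with this basis rather than trying to repair the count afterwards.
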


\begin{proof}
  The ambient space $C^n$ carries a filtration by subspaces
  \[
    V_i := \{ v \in C^n \;|\; \forall A \in U \;\; (A - I_n)^i v = 0 \}.
  \]
  There exists $s < n$ such that $V_1 \subsetneq V_2 \subsetneq \ldots \subsetneq V_s = C^n$.
  Since $G_0$ normalizes $U$, $V_i$ is invariant with respect to $G_0$ for every $i \geqslant 0$.
  Since $G_0$ is reductive, there exists a decomposition $V_{i} = V_{i - 1} \oplus W_i$ into a direct sum of $G_0$-representations for every $1 \leqslant i \leqslant s$.
  Thus, there is a decomposition $C^n = W_1 \oplus \ldots \oplus W_s$ into a direct sum of $G_0$-invariant subspaces.
  Let $n_i := \dim W_i$ for $1 \leqslant i \leqslant s$.
  We fix a basis of $C^n$ that is a union of bases of $W_1, \ldots, W_s$.
 In this basis, every element of $G_0$ is of the form
 \begin{equation}\label{eq:diag_mat}
 \begin{pmatrix}
   X_1 & 0 &  \ldots & 0 \\
   0   & X_2 &\ldots & 0 \\
   \vdots & \vdots & \ddots & \vdots \\
   0 & 0 & \ldots & X_s
 \end{pmatrix},
 \text{ where } X_i \in \Mat_{n_i}(C) \text{ for every } 1\leqslant i \leqslant s.
 \end{equation}
 And every element of $U$ is of the form
 \begin{equation}\label{eq:triang_mat}
 \begin{pmatrix}
   I_{n_1} & Y_{12} &  \ldots & Y_{1n} \\
   0   & I_{n_2} &\ldots & Y_{2n} \\
   \vdots & \vdots & \ddots & \vdots \\
   0 & 0 & \ldots & I_{n_s}
 \end{pmatrix},
 \text{ where } Y_{ij} \in \Mat_{n_i, n_j}(C) \text{ for every } 1\leqslant i < j \leqslant s.
 \end{equation}
 We denote the spaces of all the invertible matrices of the form~\eqref{eq:diag_mat} and~\eqref{eq:triang_mat} by $\mathcal{D}$ and $\mathcal{T}$, respectively.
  Consider the following variety
  \[
  P := \{ (X, Y, Z) \in \mathcal{D} \times \mathcal{T} \times \GL_n(C) \;|\; X \in G_0,\; Y\in U, \;XY = Z \} \subset \mathcal{D} \times \mathcal{T} \times \GL_n(C).
  \]
  Let $\pi\colon \mathcal{D} \times \mathcal{T} \times \GL_n(C) \to \GL_n(C)$ be the projection onto the last coordinate.
  Then $G = \pi(P)$, so $\deg G \leqslant \deg P$.
  Consider $P$ as an intersection of the variety $G_0 \times U \times \GL_n(C)$ of degree $D_1D_2$ with the variety defined by the $n^2$ equations $XY = Z$.
  Since the product $XY$ is of the form
  \[
  \begin{pmatrix}
    X_1 & X_1 Y_{12} & \ldots & X_1 Y_{1n} \\
    0 & X_2 & \ldots & X_2 Y_{2n} \\
    \vdots & \vdots & \ddots  & \vdots \\
    0 & 0 & \ldots & X_n
  \end{pmatrix},
  \]
  out of $n^2$ entries of $XY - Z$ there are 
  \begin{equation}\label{eq:prod_precise}
  \frac{n(n - 1)}{2} - \frac{n_1(n_1 - 1)}{2} - \ldots - \frac{n_s(n_s - 1)}{2} \leqslant \frac{n(n - 1)}{2}
  \end{equation}
  quadratic polynomials and the rest are linear.
  Thus, $\deg G \leqslant \deg P \leqslant D_1D_2 2^{n(n - 1) / 2}$.
\end{proof}


\section{Proof of Theorem~\ref{thm:general_bound}}
\label{sec:pf2}

\begin{proof}[Proof of Theorem~\ref{thm:general_bound}]
By~\cite[Theorem~4, p. 286]{VinbergOnishik}, $G$ can be written as a semidirect product $G_0 \ltimes U$, where $U$ is the unipotent radical of $G$ and $G_0$ is a reductive subgroup of $G$.

We apply Lemma~\ref{lem:reductive_bound} with $G = G_0$ and $F = U$ and obtain a toric envelope $H_s \subset N(U)$ of $G_0$.
Let $H_s = T\cdot G_0$, where $T$ is a torus.
We set $H := H_s\cdot U$.
Since $H_s \subset N(U)$, $H$ is an algebraic group.
Since $H = T\cdot G_0\cdot U = T\cdot G$, $H$ is a toric envelope of $G$.
Corollary~\ref{cor:TE_reductive} implies that $H_s$ is reductive.
Then Lemma~\ref{lem:levi_bound} implies that
\[
  \deg H \leqslant 2^{n(n - 1) / 2} \deg H_s \deg U.
\]
Using bounds for $H_s$ and $U$ from Lemmas~\ref{lem:reductive_bound} and~\ref{lem:unipotent_bound}, respectively, we obtain
\begin{equation}\label{eq:tighter_bound}
  \deg H \leqslant \hyperlink{Jordan}{J}(n) \hyperlink{A}{A}(n - 1) n^{n^2 + n - 5} 2^{(n - 1)n / 2} \prod\limits_{k = 1}^{n - 1} k!. 
\end{equation}
Using $\sqrt{8n} + 1 < \sqrt{16n}$ and~\eqref{eq:Jordan}, we derive $J(n) \leqslant 4^{2n^2} n^{n^2}$.
Using Lemma~\ref{lem:An} and $2\cdot 3^{(n - 1)^2 / 4} \leqslant 4^{n^2}$, we derive $A(n - 1) \leqslant 4^{n^2}$.
Using $n! \leqslant (\frac{n + 1}{2})^n$, we derive
\[
  \prod\limits_{k = 1}^{n - 1} k! \leqslant \left( \frac{n}{2} \right)^{n(n - 1) / 2}.
\]
Substituting all these bounds to~\eqref{eq:tighter_bound}, we obtain $\deg H \leqslant (4n)^{3n^2}.$
\end{proof}

\section{Proof of Theorem~\ref{thm:n2}}
\label{sec:pf3}
\begin{proof}
Using Corollary~\ref{cor:scalar}, from now on we assume that $G$ contains $Z_2$, the group of all scalar matrices in $\GL_2(C)$.
Lemma~\ref{lem:product} implies that $G = Z_2 G_{\SL}$, where $G_{\SL} := G\cap \SL_2(C)$.
According to \cite[p.7]{Kovacic}, there are only four options for $G_{\SL}$.
\begin{enumerate}[label=(\alph*), itemsep=0pt, topsep=1pt]
  \item\label{case:triangularizable}
  $G_{SL}$ is triangularizable but not diagonalizable.
  \item\label{case:diag}
  $G_{SL}$ is conjugate to a subgroup of
  \[D = \bigg{\{} \begin{pmatrix} a & 0 \\ 0 & d \end{pmatrix} \mid a, d \in C, ad \neq 0 \bigg{\}} \cup \bigg{\{}\begin{pmatrix} 0 & b \\ c & 0 \end{pmatrix} \mid b, c \in C, bc \neq 0 \bigg{\}}\]
  \item\label{case:finite}
  $G_{SL}$ is finite and neither of the previous two cases hold.
  \item\label{case:sl2}
  $G_{SL} = SL_2$
\end{enumerate}
We examine each of these cases individually below.

\paragraph{Case~\ref{case:triangularizable}: $G_{\SL}$ is triangularizable but not diagonalizable.} 
We fix a basis in which $G_{\SL}$ can be represented by upper-triangular matrices.
In this basis, $G$ is also represented by upper-triangular matrices.
Consider any non diagonalizable matrix in $G$
\begin{equation}\label{eq:Jordan_dec}
A = \begin{pmatrix} a & b \\ 0 & c  \end{pmatrix} = \begin{pmatrix} a & 0 \\ 0 & c  \end{pmatrix}\begin{pmatrix} 1 & a^{-1}b \\ 0 & 1  \end{pmatrix}.
\end{equation}
The last expression in~\eqref{eq:Jordan_dec} is the Jordan decomposition of $A$.
By \cite[Theorem~6, p. 115]{VinbergOnishik}, $\begin{pmatrix} 1 & a^{-1}b \\ 0 & 1  \end{pmatrix}$ belongs to $G$.
Since $a^{-1}b \neq 0$, the powers of this matrix generate a Zariski dense subgroup in the group $U$ of unipotent upper-triangular matrices.
Thus, $U \subset G$.
Consider the group $B$ of all invertible upper-triangular matrices.
Since $B$ is connected and the set of unipotent elements of $B$ is $U$,
Lemma~\ref{lem:def_TE} implies that $B$ is a toric envelope of $G$.
$B$ is defined by linear equations, so it is bounded by $1$.
Moreover, $B$ is a variety of dimension $3$ and degree $1$.

\paragraph{Case~\ref{case:diag}: $G_{\SL}$ is a subgroup of $D$.}
Then $G$ is also contained in $D$.
Since $D^\circ$ is the group of diagonal matrices, the only unipotent element in $D^\circ$ is the identity matrix.
Since $D$ has two connected components, Lemma~\ref{lem:def_TE} implies that either $D$ is a toric envelope of $G$ or, if $G$ is diagonalizable, $D^\circ$ is a toric envelope of $G$.
Since $D$ is defined inside $\GL_2(C)$ by quadratic equations $x_{11}x_{12} = x_{21}x_{22} = 0$, it is bounded by $2$. 
$D^\circ$ is defined by linear equations and bounded by $1$.
Moreover, $D$ is a variety of degree $2$ and dimension $2$ and $D^\circ$ is of degree $1$ and dimension $2$.

\paragraph{Case~\ref{case:finite}: $G_{\SL}$ is finite and neither of the two previous cases hold.}
We will use a classification of finite subgroups of $\SL_2(\mathbb{C})$ given in~\cite[\S\S 101-103]{MBD} (for a more modern treatment, see~\cite[Chapter~2]{Serrano}).
Based on this classification, $G_{\SL}$ must be one of the following five types:

\begin{enumerate}[itemsep=-1pt,topsep=1pt]
\item\label{case:cycl}
Cyclic Groups~\cite[\S 101, case~A]{MBD}
\item\label{case:dihedr}
Binary Dihedral Group~\cite[\S 101, case~B]{MBD}
\item\label{case:tetra}
Binary Tetrahedral Group~\cite[\S 102, case~C]{MBD}
\item\label{case:octa}
Binary Octahedral Group~\cite[\S 102, case~D]{MBD}
\item\label{case:icosa}
Binary Icosahedral Group~\cite[\S 103, case~E]{MBD}
\end{enumerate}
In 
cases~\eqref{case:cycl} and~\eqref{case:dihedr}, $G_{\SL}$ is conjugate to a subgroup of $D$ (see~\cite[\S 101, case~A and~B]{MBD}), so these cases are already considered in Case~\ref{case:diag}.
For cases~\eqref{case:tetra}-\eqref{case:icosa}, we take $G = Z_2G_{\SL}$ to be a toric envelope of itself.
Then we find a $d$ such that $G$ is bounded by $d$ using Algorithm~\ref{alg:degree} (for a {\sc Maple} code, see~{\url{https://github.com/pogudingleb/ToricEnvelopes.git}}). 

\begin{algorithm}[H]
\caption{Finding a degree bound for a toric envelope of a finite subgroup of $\SL_2(C)$}
\label{alg:degree}
\begin{description}[itemsep=0pt, topsep=1pt]
  \item[Input] Generators of a finite subgroup $G_{\SL} \subset \SL_2(C)$

 \item[Output] Positive integer $d$ such that $G := Z_2 G_{\SL}$ is bounded by $d$
 
\begin{enumerate}[label=(\arabic*),itemsep=0pt, topsep=1pt]
  \item Compute the list of all elements in $G_{\SL}$
  \item Compute generators of the vanishing ideal $I$ of $G := Z_2 G_{\SL}$
   \item Compute the reduced Gr\"obner basis $B$ of $I$ with respect to graded lexicographic ordering
   \item Check all $d$ from $1$ to $\max\limits_{p \in B} \deg p$ and find the minimum $d$
   such that
   \[
   \sqrt{ (p \in B | \deg p \leqslant d) } = I
   \]
   \item Return $d$ found in the previous step
\end{enumerate}
\end{description}
\end{algorithm}
Algorithm~\ref{alg:degree} outputs bounds 3, 4, and 6 for cases~\eqref{case:tetra}, \eqref{case:octa}, and~\eqref{case:icosa}, respectively.
The dimension of $G$ is one.
Since each of the groups~\eqref{case:tetra}  \eqref{case:octa}, and~\eqref{case:icosa} contains~$\begin{pmatrix} -1 & 0 \\ 0 & -1 \end{pmatrix}$, the degree of $G$ is at most half of the maximum of the cardinalities of these groups. 
Thus, $\deg G \leqslant 60$.

\paragraph{Case~\ref{case:sl2}: $G_{\SL} = \SL_2(C)$.} Then $G = Z_2 \SL_2(C) = \GL_2(C)$ is bounded by $0$.
In this case, $G$ is a variety of dimension $4$ and degree $1$.

Collecting together the results for cases~\ref{case:triangularizable}-\ref{case:sl2}, we conclude that every algebraic subgroups $G \subset \SL_2(C)$ has a toric envelope bounded by six.
\end{proof}

From the proof of Theorem~\ref{thm:n2}, we can extract additional information about possible toric envelopes.

\begin{corollary}\label{cor:n2}
  Let $G \subset \GL_2(C)$ is an algebraic group.
  Then there exists a \hyperlink{def_envelope}{toric envelope} $H$ for $G$ containing $Z_2$ and satisfying one of the following
  \begin{multicols}{2}\raggedcolumns
  \begin{itemize}[itemsep=0pt,topsep=1pt]
    \item $\dim H = 4$ and $\deg H = 1$;
    \item $\dim H = 3$ and $\deg H = 1$;
    \item $\dim H = 2$ and $\deg H = 2$;
    \item $\dim H = 2$ and $\deg H = 1$;
    \item $\dim H = 1$ and $\deg H \leqslant 60$.
  \end{itemize}
  \end{multicols}
\end{corollary}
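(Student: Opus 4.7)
The plan is to extract the corollary directly from the case-by-case analysis carried out in the proof of Theorem~\ref{thm:n2}, since each of the four (really five) cases considered there produces a toric envelope whose dimension and degree we have already recorded along the way. So I would reduce, via Corollary~\ref{cor:scalar}, to the situation $Z_2 \subset G$, and then apply Lemma~\ref{lem:product} to write $G = Z_2 \cdot G_{\SL}$ where $G_{\SL} = G \cap \SL_2(C)$. This allows me to appeal to Kovacic's classification, exactly as in the proof of Theorem~\ref{thm:n2}.

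Next, I would run through the Kovacic cases and for each one identify a toric envelope together with its dimension and degree. In case~\ref{case:triangularizable} (triangularizable but not diagonalizable), the proof of Theorem~\ref{thm:n2} shows that the group $B$ of invertible upper-triangular matrices is a toric envelope, and $B$ is $3$-dimensional and cut out by a single linear equation, so $\dim H = 3$, $\deg H = 1$. In case~\ref{case:diag}, the group $D$ from the theorem's proof is a toric envelope when $G$ is not diagonalizable, giving $\dim H = 2$ and $\deg H = 2$; otherwise $D^\circ$ suffices, giving $\dim H = 2$ and $\deg H = 1$. Case~\ref{case:finite} splits: the cyclic and binary dihedral subcases fall into case~\ref{case:diag}, so only the binary tetrahedral, octahedral, and icosahedral subcases are genuinely new. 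For these, $G = Z_2 G_{\SL}$ is itself a toric envelope, $\dim G = 1$, and $\deg G$ is at most half the cardinality of the binary icosahedral group, i.e.\ $\deg G \leqslant 60$. Finally, in case~\ref{case:sl2}, $G = \GL_2(C)$ is its own toric envelope, of dimension $4$ and degree $1$.

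Assembling these options yields exactly the five possibilities listed in the corollary. There is essentially no obstacle: the main work (the classification and the Gröbner-basis computation that yields the bound $60$) has already been performed inside the proof of Theorem~\ref{thm:n2}; what remains is only to tabulate the dimension and degree of the envelope produced in each branch, and to note that in every branch the envelope contains $Z_2$ (since we have assumed $Z_2 \subset G$ and the envelopes constructed in the proof all contain $G$).
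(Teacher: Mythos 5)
Your proposal is correct and takes exactly the same route as the paper: the paper gives no separate proof of Corollary~\ref{cor:n2}, merely noting that it is read off from the case analysis in the proof of Theorem~\ref{thm:n2}, where the dimension and degree of the constructed envelope are recorded explicitly at the end of each case. Your tabulation of those values and the observation that each envelope contains $Z_2$ (after the reduction via Corollary~\ref{cor:scalar}) is precisely the intended argument.
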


\section{Proof of Theorem~\ref{thm:n3}}
\label{sec:pf4}
\begin{proof}
By~\cite[Theorem~4, p. 286]{VinbergOnishik}, $G$ can be written as a semidirect product $G_0 \ltimes U$, where $U$ is the unipotent radical of $G$ and $G_0$ is a reductive subgroup of $G$.
   Since $G_0$ is reductive, its representation in $C^3$ is completely reducible (see~\cite[Theorem~4.3, p. 117]{Hochschild}).
  There are three cases for the dimensions of $G_0$-irreducible components of $C^3$.
  \begin{enumerate}[label=(\alph*), itemsep=0pt, topsep=2pt]
     \item\label{case:1_1_1} $C^3 = W_1 \oplus W_2 \oplus W_3$, where $\dim W_1 = \dim W_2 = \dim W_3 = 1$ and $W_1, W_2$, and $W_3$ are irreducible $G_0$-representations;
    
    \item\label{case:2_1} $C^3 = V_1 \oplus V_2$, where $\dim V_1 = 1$, $\dim V_2 = 2$, and $V_1$ and $V_2$ are irreducible $G_0$-representations;
    
    \item\label{case:3} $C^3$ is $G_0$-irreducible.
  \end{enumerate}
  
  \paragraph{Case~\ref{case:1_1_1}: $C^3 = W_1 \oplus W_2 \oplus W_3$,} where $\dim W_1 = \dim W_2 = \dim W_3 = 1$ and $W_1, W_2$, and $W_3$ are irreducible representations of $G_0$.
  Then $G_0$ is diagonalizable in some basis, we fix such a basis.
    Let $D$ be the group of all diagonal matrices in the basis. We consider
    $H := (D \cap N(U))^\circ \cdot G$.
    Since $D \cap N(U)$ commutes with $G_0$ and normalizes $U$, it normalizes $G$.  So $H$ is an algebraic group.
    Hence, $H$ is a toric envelope of $G$.
    
    Since $\deg D = 1$ and $3 \geqslant \dim (D \cap N(U)) \geqslant \dim Z_3 = 1$, Corollary~\ref{cor:intersect_with_normalizer} implies that $\deg (D \cap N(U)) \leqslant 9$.
    Lemma~\ref{lem:unipotent_bound} implies that $\deg U \leqslant 2$.
    Since $D \cap N(U)$ is reductive, Lemma~\ref{lem:levi_bound} implies that
    \[
    \deg (D \cap N(U)) \cdot U \leqslant 9 \cdot 2 \cdot 8 = 144.
    \]
    Then 
    \underline{$\deg H \leqslant \deg (D \cap N(U))G = \deg (D \cap N(U)) U \leqslant 144$}.
  
  \paragraph{Case~\ref{case:2_1}: $C^3 = V_1 \oplus V_2$}, where $\dim V_1 = 1$, $\dim V_2 = 2$, and $V_1$ and $V_2$ are irreducible representations of $G_0$.
   According to the presentations~\eqref{eq:diag_mat} and~\eqref{eq:triang_mat} for $G_0$ and $U$ constructed in the proof of Lemma~\ref{lem:levi_bound}, one of the following two cases holds:
  \begin{gather*}
   \text{either }
   G_0 \subset \bigl\{ \begin{pmatrix} A & 0 \\ 0 & b \end{pmatrix} \;\mid\; A \in \GL_2(C), b \in C^\ast \bigr\} \text{ and } U \subset \bigl\{ \begin{pmatrix}
   1 & 0 & u \\
   0 & 1 & v \\
   0 & 0 & 1
   \end{pmatrix} \;\mid\; u, v \in C \bigr\},\\
   \text{ or }
   G_0 \subset \bigl\{ \begin{pmatrix} b & 0 \\ 0 & A \end{pmatrix} \;\mid\; A \in \GL_2(C), b \in C^\ast \bigr\} \text{ and } U \subset \bigl\{ \begin{pmatrix}
   1 & u & v \\
   0 & 1 & 0 \\
   0 & 0 & 1
   \end{pmatrix} \;\mid\; u, v \in C \bigr\}
   \end{gather*}
   We will consider the former. The latter is completely analogous.
   
   Let $T_1$ be the group of the matrices of the form $\operatorname{diag}(1, 1, a)$, where $a \in C^\ast$.
   Since $T_1 \subset Z(G_0)$, $T_1G_0$ is an algebraic group.
   Moreover, there is a decomposition $G_0 = T_1 G_{1}$, where $G_1$ acts trivially on $V_1$.
   Then $G_1$ can be considered as a subgroup of $\GL(V_2) \cong \GL_2(C)$.
   Let $H_1$ be a toric envelope for $G_1$ given by Corollary~\ref{cor:n2}.
   Then there exists a torus $T_2 \subset \GL(V_2)$ such that $H_1 = T_2G_1$.
   Then $T_2$ and $T_1$ commute, so $T := T_1T_2$ is a torus.
   Thus, $H_0 := TG_0$ is a toric envelope for $G_0$.
   
   We set $H := (H_0 \cap N(U)) \cdot U$.
   Since $G_0 \subset N(U)$, $H = (T\cap N(U)) G_0U$, so $H$ is a toric envelope of $G = G_0U$.
   Using Lemma~\ref{lem:levi_bound} with more precise bound given by~\eqref{eq:prod_precise}, we obtain
   \begin{equation}\label{eq:bound2_1}
   \deg H \leqslant 4 \deg (H_0 \cap N(U)) \deg U.
   \end{equation}
   Direct computation shows that any subgroup of 
   \[
   \left\{ \begin{pmatrix}
   1 & 0 & u \\
   0 & 1 & v \\
   0 & 0 & 1
   \end{pmatrix} \;\mid\; u, v \in C \right\}
   \]
   is an affine subspace of $\Mat_3(C)$, so $\deg U = 1$.
   Both $T_1$ and $Z_2 \subset H_1$ normalize $U$, so $\dim (H_0 \cap U) \geqslant 2$.
   Using Corollary~\ref{cor:intersect_with_normalizer} and classification from Corollary~\ref{cor:n2}, we obtain
   \[
   \deg (H_0 \cap N(U)) \leqslant 3^{\dim H_1 - 1}\deg H_1 \leqslant \max(27 \cdot 1, 9 \cdot 1, 3 \cdot 2, 1 \cdot 60) = 60.
   \]
   Plugging all the bounds into~\eqref{eq:bound2_1}, we obtain \underline{$\deg H \leqslant 240$}.
    
    \paragraph{Case~\ref{case:3}: $C^3$ is $G_0$-irreducible.}
    Since the space of fixed vectors of $U$ is $G_0$-invariant, it coincides with $C^3$, so $U = \{e\}$.
    \cite[Proposition, p. 181]{Borel} implies that $G_0^\circ$ can be written as $S T$, where $T := C(G_0^\circ)^\circ$ is torus and $S := [G_0^\circ, G_0^\circ]$ is semisimple.
    
    If $C^3$ is an irreducible $S$-representation, let $H := Z_3\cdot G$.
    As in the proof of Lemma~\ref{lem:connected_reductive} (see~\eqref{eq:whitehead_cor}), one can show that $N(S)^\circ = S Z(S)^\circ$.
    Schur's lemma implies that $Z(S) = Z_3$.
    Since $H \subset N(S)$ and $H^\circ \supset S \cdot Z(S)$, we obtain that $H^\circ = N(S)^\circ$, so $\deg H \leqslant \deg N(S)$.    
    Since $\dim N(S) \geqslant \dim Z_3 + \dim S \geqslant 4$, Corollary~\ref{cor:intersect_with_normalizer} applied with $G_0 = \GL_3(C)$ and $G_1 = S$ implies that \underline{$\deg H \leqslant 3^5 = 243$.}
    
    If $C^3$ is not an irreducible representation of $S$, then there exists an $S$-invariant one-dimensional subspace spanned by a vector $v$.
    Since $[S, S] = S$, $gv = v$ for every $g \in S$.
    Consider a subspace $I = \{u \in C^3 \;|\; \forall g \in S \; gu = u \}$.
    Since $v \in I$, $\dim I > 0$. 
    Since $S$ is normal in $G_0$, $I$ is $G_0$-invariant.
    Since $C^3$ is $G_0$-irreducible, $I = C^3$, so $S = \{e\}$.
    Since $U = S = \{e\}$, \cite[Lemma~10.10]{Wehrfritz} implies that $G = G_0 = \Gamma T$ for some finite $\Gamma$.

    Let $T_0$ be any maximal element of the set of all the tori containing $T$ and normalized by $\Gamma$.
    We set $H := T_0\cdot G$, then $H$ is a toric envelope of $G$.
    Since $T_0$ is a torus, all its irreducible representations are one-dimensional and are described by characters of $T_0$.
    We denote distinct characters of $T_0$ by $\chi_1, \ldots, \chi_s$. Then we write
    \[
    C^3 = \bigoplus\limits_{i = 1}^s V_{\chi_i}, \text{ where }  V_{\chi_i} := \{v \;|\; \forall g \in T_0 \; gv = \chi_i(g)v\} \neq \{0\} \text{ for } i = 1, \ldots, s.
    \]
        Since $\Gamma$ normalizes $T_0$, for every $1 \leqslant i \leqslant s$, there exists $1 \leqslant j \leqslant s$ such that $\Gamma(V_{\chi_i}) \subset V_{\chi_j}$.
    Consider possible values of $s$

      \paragraph{$\mathbf{s = 1}$.} Then $T \subset Z_3$, so $T_0 = Z_3$.
      By Lemma~\ref{lem:product}, we can further assume that $\Gamma \subset \SL_3(C)$.
      We will use the classification of finite subgroups of $\SL_3(C)$ from~\cite[p. 2-3]{YY} (see also~\cite[\S 3]{Serrano}).
      
      Since there is no torus strictly containing $Z_3$ that is normalized by $\Gamma$, $\Gamma$ is an imprimitive subgroup of $\SL_3(C)$ (see~\cite[p. 10]{YY}).
      Thus, only cases (E)-(K) from~\cite[p. 2-3]{YY} are possible. 
      One can see that every group $\Gamma$ of types (E)-(K) satisfies one of the following
      \begin{itemize}
        \item $|\Gamma| \leqslant 360$ (cases (E), (F), (H), (I), and (J));
        \item $|\Gamma| \leqslant 1080$ and $\Gamma$ contains a matrix $\omega I_3$, where $\omega$ is a primitive cubic root of unity and $I_3$ is the identity matrix (cases (J), (L), and (K)).
      \end{itemize}
      In both cases we have \underline{$\deg H \leqslant |\Gamma| / |\Gamma \cap Z_3| \leqslant 360.$}
      
      \paragraph{$\mathbf{s = 2}$.} Without loss of generality we can assume that $\dim V_{\chi_1} = 2$ and $\dim V_{\chi_2} = 1$.
      Then $V_{\chi_1}$ is an invariant subspace for both $\Gamma$ and $T_0$.
      This contradicts the assumption that $C^3$ is an irreducible $G_0$-representation.
      
      \paragraph{$\mathbf{s = 3}$.} If we choose a basis $e_1, e_2, e_3$ such that $e_i \in V_{\chi_i}$ for $1 \leqslant i \leqslant s$, every element of $G$ can be written in this basis as a product of a diagonal matrix and a permutation matrix.
      Let $D$ be a group of all diagonal matrices in this basis. 
      Then $\Gamma$ normalizes $D$ and $T_0 \subset D$, so $T_0 = D$.
      Thus, $H := \Gamma D$.
      Since the number of connected component of $H$ does not exceed the number of permutation matrices and $\deg D = 1$, we have \underline{$\deg H \leqslant 3! = 6$}. 
  
  \smallskip
  \noindent
  In all the cases above, we constructed a toric envelope $H$ of $G$ such that $\deg H \leqslant 360$.
\end{proof}


\paragraph{Acknowledgements.} The authors are grateful to Ivan Arzhantsev, Anton Baikalov, Harm Derksen, Gregor Kemper, Alexey Ovchinnikov, Michael Singer, Jacques-Arthur Weil, and the referees for their suggestions and helpful discussions.


\bibliographystyle{amsplain}
\bibliography{bibdata}

\end{document}